\documentclass[11pt]{amsart}

\usepackage{amsmath,amssymb,amsthm}
\usepackage{xcolor}
\usepackage[colorlinks=true,linkcolor=blue!60!black,citecolor=blue!60!black,urlcolor=blue!60!black]{hyperref}
\usepackage{microtype}

\numberwithin{equation}{section}

\theoremstyle{plain}
\newtheorem{theorem}{Theorem}[section]
\newtheorem{proposition}[theorem]{Proposition}
\newtheorem{lemma}[theorem]{Lemma}
\newtheorem{corollary}[theorem]{Corollary}
\newtheorem*{thmA}{Theorem A}
\newtheorem*{thmB}{Theorem B}
\newtheorem*{thmC}{Theorem C}

\theoremstyle{definition}
\newtheorem{definition}[theorem]{Definition}
\newtheorem{example}[theorem]{Example}
\newtheorem{question}[theorem]{Question}

\theoremstyle{remark}
\newtheorem{remark}[theorem]{Remark}

\DeclareMathOperator{\Hom}{Hom}

\DeclareMathOperator{\Ext}{Ext}
\DeclareMathOperator{\Tor}{Tor}
\DeclareMathOperator{\HH}{HH}
\DeclareMathOperator{\HHl}{H}
\DeclareMathOperator{\Tr}{Tr}
\DeclareMathOperator{\tr}{tr}
\DeclareMathOperator{\gldim}{gl.dim}
\DeclareMathOperator{\pdim}{pd}
\DeclareMathOperator{\rad}{rad}

\DeclareMathOperator{\im}{im}
\DeclareMathOperator{\coker}{coker}
\DeclareMathOperator{\modcat}{mod}

\DeclareMathOperator{\Dcat}{\mathcal{D}}
\DeclareMathOperator{\Z}{Z}
\DeclareMathOperator{\B}{B}

\newcommand{\D}{\mathsf{D}}
\newcommand{\E}{E}
\newcommand{\kk}{k}
\newcommand{\Lm}{\Lambda}
\newcommand{\Lme}{\Lambda^{\mathrm{e}}}
\newcommand{\om}{\omega}
\newcommand{\Pii}{\Pi}
\newcommand{\Ltimes}{\otimes^{\mathbf{L}}}

\begin{document}

\title[$\tau$-Hochschild theory and the Coxeter automorphism]{$\tau$-Hochschild (co)homology, the square of the Serre bimodule, and the Coxeter automorphism of the Tamarkin--Tsygan calculus}

\author{Marco Armenta}

\begin{abstract}
We relate two recent enrichments of the Hochschild theory of a finite-dimensional algebra $\Lm$: the $\tau$-Hochschild (co)homology of Cibils, Lanzilotta, Marcos and Solotar, built from Iyama's higher Auslander--Reiten translates of the regular bimodule, and the Coxeter automorphism $\sigma_\Lm$ of the Tamarkin--Tsygan calculus. We show that the Nakayama functor of the enveloping algebra transforms Happel's minimal resolution into a complex representing $\D\Lm\Ltimes_\Lm \D\Lm$, the square of the Serre bimodule whose shift generates $\sigma_\Lm$, and that the $\tau$-translates $\tau_n\Lm$ are precisely the cycle bimodules of this complex. This produces extensions $0\to \B_n\to \tau_n\Lm\to \Tor_n^\Lm(\D\Lm,\D\Lm)\to 0$ whose outer term is dual to $\Ext^n_{\Lme}(\Lm,\Lme)$ and whose inner term is a strictly Morita-theoretic residue of the minimal model. In top degree $d=\gldim\Lm$ the residue vanishes and $\tau_d\Lm$ is the dual of the degree-one component of the $(d+1)$-preprojective algebra of Iyama--Oppermann; for $\Lm=\kk Q$ hereditary, $\tau_{\Lme}\Lm\cong \D\Pi(Q)_1$ and $\HH^1_\tau(\kk Q)$ is the degree-one part of the zeroth Hochschild homology of the preprojective algebra. For selfinjective algebras the derived part vanishes identically, which explains structurally the growth of $\tau$-cohomology for the Buchweitz--Green--Madsen--Solberg algebras. Taking Euler characteristics in the Cibils--Lanzilotta--Marcos--Solotar dimension formulas recovers Happel's trace formula $\sum_i(-1)^i\dim\HH^i(\Lm)=-\tr\sigma_\Lm$. We prove that the two refinements are transversal, propose the combined Morita invariant, exhibit derived-equivalent algebras of finite global dimension whose $\tau$-translates have identical dimension but opposite composition, and pose the problem of derived invariance of $\tau$-Hochschild theory over the smooth locus.
\end{abstract}

\maketitle

\section{Introduction}\label{sec:intro}

Two recent and independent lines of work enrich the Hochschild theory of a finite-dimensional algebra $\Lm$ over a field $\kk$ in ways that, at first sight, point in opposite directions.

On one side, Cibils, Lanzilotta, Marcos and Solotar \cite{CLMS1,CLMS2} introduce the \emph{$\tau$-Hochschild cohomology and homology} of $\Lm$. Following one of the guiding ideas of $\tau$-tilting theory \cite{AIR} replace $\Ext^1_A(M,N)$ by $\D\Hom_A(N,\tau M)$ they apply Iyama's higher Auslander--Reiten translates $\tau_n=\tau\Omega^{n-1}$ \cite{Iya1,Iya2} to the regular bimodule $\Lm$ over the enveloping algebra $\Lme$, computed on Happel's minimal projective resolution \cite{Hap1}, and set
\[
\HHl^n_\tau(\Lm,X)=\D\Hom_{\Lm\text{-}\Lm}(X,\tau_n\Lm),\qquad
\HHl_n^\tau(\Lm,X)=\Hom_{\Lm\text{-}\Lm}(\D X,\tau_n\Lm).
\]
These theories interpolate between Hochschild theory and the minimal model of $\Lm$: cohomologically one keeps cochains modulo coboundaries (forgetting the cocycle condition), homologically one keeps cycles (forgetting boundaries) \cite[Theorem~2.12]{CLMS2}. They are Morita invariant but, remarkably, \emph{not} derived invariant \cite[Example~4.12]{CLMS2}, and they detect refined versions of the finiteness questions of Happel and Han \cite[Theorem~5.5]{CLMS2}.

On the other side, it has been shown \cite{A1,A2,A3,A4,A5} that the full Tamarkin--Tsygan calculus $\HHl(\Lm)$, that is, Hochschild cohomology and homology together with cup, cap, Gerstenhaber bracket, Lie actions and Connes' operator, is a derived invariant, and that for $\gldim\Lm<\infty$ the two-sided tilting complex $\D\Lm[-1]$ induces a distinguished automorphism $\sigma_\Lm$ of this calculus, the \emph{Coxeter automorphism}. The pair $(\HHl(\Lm),\sigma_\Lm)$ is a derived invariant which categorifies the Coxeter polynomial and strictly refines the calculus alone \cite[Theorem~4.2, Theorem~5.5]{A5}.

So one theory refines Hochschild theory \emph{transversally to} derived equivalence, the other refines it \emph{along} derived equivalence. The purpose of this manuscript is to show that the two constructions are nevertheless windows onto a \emph{single object}: the image of Happel's minimal resolution under the Nakayama functor of the enveloping algebra, which is a complex representing the square $\om_\Lm\Ltimes_\Lm\om_\Lm$ of the Serre bimodule $\om_\Lm=\D\Lm$, whose shift generates $\sigma_\Lm$. Once this is in place, each theory illuminates the other: the $\tau$-translates acquire a derived-invariant ``shadow'' and a strictly Morita-theoretic ``residue''; the Coxeter trace formula of Happel becomes the Euler characteristic of the $\tau$-correction terms; and the top $\tau$-translate is identified with higher preprojective data in the sense of Iyama--Oppermann \cite{IO}. 

We work throughout over a field $\kk$, with $\Lm$ a finite-dimensional $\kk$-algebra whose semisimple quotient $\E=\Lm/\rad\Lm$ is separable, as in \cite{CLMS2}. Write $\D=\Hom_\kk(-,\kk)$, $\Lme=\Lm\otimes_\kk\Lm^{\mathrm{op}}$, identify $\Lm$-bimodules with left $\Lme$-modules, and let $\nu=\D\Hom_{\Lme}(-,\Lme)$ be the Nakayama functor of $\modcat\Lme$. Let $P_\bullet\to\Lm$ be Happel's minimal projective resolution of the regular bimodule, with $P_n=\Lm\otimes_\E T_n\otimes_\E\Lm$ and $T_n=\Tor_n^\Lm(\E,\E)$ \cite{Hap1,BK,CLMS2}.

\begin{thmA}[= Theorem~\ref{thm:bridge}]
Let $\Lm$ be a finite-dimensional algebra with $\E$ separable.
\begin{enumerate}
\item There are natural isomorphisms of complexes of bimodules
\[
\nu(P_\bullet)\;\cong\;\D\Lm\otimes_\Lm P_\bullet\otimes_\Lm\D\Lm,\qquad \nu(P_n)\cong \D\Lm\otimes_\E T_n\otimes_\E\D\Lm,
\]
and $\nu(P_\bullet)$ represents $\om_\Lm\Ltimes_\Lm\om_\Lm=\D\Lm\Ltimes_\Lm\D\Lm$ in $\Dcat(\Lme)$.
\item For every $n\geq 0$ there are natural isomorphisms of bimodules
\[
\HHl_n\bigl(\nu P_\bullet\bigr)\;\cong\;\Tor_n^\Lm(\D\Lm,\D\Lm)\;\cong\;\D\Ext^n_{\Lme}(\Lm,\Lme)\;\cong\;\D\Ext^n_{\Lm}(\D\Lm,\Lm).
\]
\item For every $n\geq 1$, the $n$-th higher Auslander--Reiten translate of the regular bimodule is the bimodule of $n$-cycles: $\tau_n\Lm=\Z_n(\nu P_\bullet)=\ker\nu(d_n)$. In particular there are short exact sequences of bimodules
\[
0\longrightarrow \B_n(\nu P_\bullet)\longrightarrow \tau_n\Lm \longrightarrow \Tor_n^\Lm(\D\Lm,\D\Lm)\longrightarrow 0,
\]
where $\B_n(\nu P_\bullet)=\im\nu(d_{n+1})$.
\end{enumerate}
\end{thmA}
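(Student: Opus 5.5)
The plan is to compute the Nakayama functor of $\Lme$ on free bimodules of the form $\Lm\otimes_\kk V\otimes_\kk\Lm$, and then descend to $\Lm\otimes_\E V\otimes_\E\Lm$ by separability of $\E$. Part (1) is then essentially formal, (2) is a hom--tensor computation, and (3) follows from the definition $\tau_n=\tau\Omega^{n-1}$ together with the minimality of Happel's resolution.

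For (1), start from the observation that for a projective bimodule $P$ there is a natural isomorphism $\nu P\cong \D\Lm\otimes_\Lm P\otimes_\Lm\D\Lm$. To see it, note that $\nu(\Lme)=\D\Lme\cong\D\Lm\otimes_\kk\D\Lm$ as bimodules, with the outer structures in the correct order after the standard twist. Both sides are additive and right exact in $P$, and they agree naturally on $\Lme$, including on endomorphisms given by right multiplication by $\Lme$. Hence they agree on all projectives, and the isomorphism is natural in maps between projectives. It is therefore an isomorphism of complexes $\nu(P_\bullet)\cong\D\Lm\otimes_\Lm P_\bullet\otimes_\Lm\D\Lm$.

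Applying it to $P_n=\Lm\otimes_\E T_n\otimes_\E\Lm$ gives $\nu P_n\cong\D\Lm\otimes_\E T_n\otimes_\E\D\Lm$. Since each $P_n$ is projective as a left and as a right $\Lm$-module, $P_\bullet\otimes_\Lm\D\Lm$ is a resolution of $\D\Lm$ by right-projective bimodules. Tensoring with $\D\Lm$ on the left then computes $\D\Lm\Ltimes_\Lm\D\Lm$. Concretely, $\D\Lm\otimes_\Lm P_\bullet\otimes_\Lm\D\Lm$ represents $\D\Lm\Ltimes_\Lm\Lm\Ltimes_\Lm\D\Lm$, by the usual balancing argument for bimodule resolutions.

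For (2), the first isomorphism is immediate from (1), since the homology of the representing complex is $\Tor^\Lm_n(\D\Lm,\D\Lm)$. For the second, compute degreewise
\[
\nu(P_\bullet)=\D\Hom_{\Lme}(P_\bullet,\Lme).
\]
Because $\D$ is exact, this gives $\HHl_n(\nu P_\bullet)\cong\D\HHl^n(\Hom_{\Lme}(P_\bullet,\Lme))=\D\Ext^n_{\Lme}(\Lm,\Lme)$. For the third isomorphism, use adjunction: $\D\Ext^n_\Lm(\D\Lm,\Lm)\cong\Tor_n^\Lm(\D\Lm,\D\Lm)$ via $\D\Hom_\Lm(X,\Lm)\cong \D\Lm\otimes_\Lm X$ for finitely generated projective $X$, applied to a projective resolution of $\D\Lm$. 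Alternatively, derive the identity $\D\RHom_\Lm(\D\Lm,\Lm)\cong\D\Lm\Ltimes_\Lm\D\Lm$. At each stage we should check that the bimodule structures match, which is routine but needs care with the $\mathrm{op}$ twist.

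For (3), recall $\tau M=\D\Tr M$, and for a minimal presentation $Q_1\xrightarrow{f}Q_0\to M\to0$ we have $\tau M=\ker\nu(f)$. Since $P_\bullet$ is minimal, $P_n\xrightarrow{d_n}P_{n-1}\to\Omega^{n-1}\Lm\to0$ is a minimal projective presentation of $\Omega^{n-1}\Lm$. Hence $\tau_n\Lm=\ker\nu(d_n)=\Z_n(\nu P_\bullet)$ for $n\ge1$. The short exact sequence is then the tautological one $0\to\B_n\to\Z_n\to\HHl_n\to0$, combined with (2).

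The main obstacle I anticipate is in (1). One must confirm that the explicit isomorphism $\nu(\Lme)\cong\D\Lm\otimes_\kk\D\Lm$ is compatible with the bimodule structures and is natural for all $\Lme$-endomorphisms. One must also confirm that the passage from $\otimes_\kk$ to $\otimes_\E$ is legitimate, which uses that $\E\otimes\E^{\mathrm{op}}$ is semisimple so that $P_n$ is a summand of a free bimodule. The rest follows formally from there.
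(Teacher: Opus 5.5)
Your proposal is correct and follows the paper's proof essentially step for step. You use the same additivity argument for $\nu(P)\cong\D\Lm\otimes_\Lm P\otimes_\Lm\D\Lm$, exactness of $\D$ for $\D\Ext^n_{\Lme}(\Lm,\Lme)$, and the four-term sequence on the minimal presentation $P_n\to P_{n-1}\to\Omega^{n-1}\Lm\to0$ for (3). The remaining differences are cosmetic: you get the one-sided duality from $\D\Hom_\Lm(X,\Lm)\cong\D\Lm\otimes_\Lm X$ on a resolution instead of quoting $\D\Tor_n^\Lm(M,N)\cong\Ext^n_\Lm(N,\D M)$, and you resolve the right-hand factor by $P_\bullet\otimes_\Lm\D\Lm$ where the paper resolves the left-hand one by $\D\Lm\otimes_\Lm P_\bullet$. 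One label needs correcting: the components $\Lm\otimes_\E T_n\otimes_\E\D\Lm$ of your resolution are projective as \emph{left}, not right, $\Lm$-modules, which is exactly what tensoring with $\D\Lm$ on the left requires.
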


Thus the $\tau$-Hochschild theory of \cite{CLMS2} is computed by the \emph{cycles} of the canonical minimal-model representative of the square of the Serre bimodule. The homology of this representative, which is the Serre-twisted Hochschild homology $\Tor_\bullet^\Lm(\D\Lm,\D\Lm)$, dual to the cohomology $\Ext^\bullet_{\Lme}(\Lm,\Lme)$ of the inverse dualizing complex, depends only on the quasi-isomorphism class of $\om_\Lm^{\Ltimes 2}$, an object whose conjugation class is a derived invariant by \cite[Lemma~3.4]{A5}; the boundary subobject $\B_n$ is a strictly Morita-theoretic residue of the minimal model. Two structural consequences are immediate and, we believe, clarifying. First, if $\Lm$ is selfinjective, then $\Lme$ is selfinjective, so the derived part vanishes identically and $\tau_n\Lm=\B_n$ is pure residue for all $n\geq 1$ (Corollary~\ref{cor:selfinj}); the unbounded growth of the $\tau$-cohomology of the Buchweitz--Green--Madsen--Solberg algebras \cite{BGMS} computed in \cite[Section~6]{CLMS2} is therefore carried entirely by the minimal model, with zero derived shadow. Second, in top degree the residue vanishes:

\begin{thmB}[= Theorem~\ref{thm:preproj}]
Let $\gldim\Lm=d\geq 1$ and let $\Pii_{d+1}(\Lm)=T_\Lm\bigl(\Ext^d_{\Lme}(\Lm,\Lme)\bigr)$ be the $(d+1)$-preprojective algebra of Iyama--Oppermann \cite{IO}, with its tensor grading. Then:
\begin{enumerate}
\item $\tau_d\Lm\;\cong\;\D\bigl(\Pii_{d+1}(\Lm)_1\bigr)$ as bimodules;
\item $\HH^d_\tau(\Lm)\;\cong\;\HH_0\bigl(\Lm,\Pii_{d+1}(\Lm)_1\bigr)=\bigl(\Pii_{d+1}(\Lm)/[\Pii_{d+1}(\Lm),\Pii_{d+1}(\Lm)]\bigr)_1$;
\item $\HH_d^\tau(\Lm)\;\cong\;\Hom_{\Lme}\bigl(\Pii_{d+1}(\Lm)_1,\Lm\bigr)=0$, the vanishing being equivalent to the top-degree case $\HH_d(\Lm)=0$ of the Han--Keller vanishing theorem.
\end{enumerate}
In particular, for $\Lm=\kk Q$ hereditary, $\tau_{\Lme}\Lm\cong\D\Pi(Q)_1$ for the classical preprojective algebra $\Pi(Q)$, and $\HH^1_\tau(\kk Q)\cong\HH_0(\Pi(Q))_1$.
\end{thmB}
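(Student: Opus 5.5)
We use Theorem~A in top degree $n=d$. Since $\gldim\Lm=d$ and $\E$ is separable, $\Lme$ also has global dimension $d$, so Happel's minimal resolution $P_\bullet$ has length $d$ and $P_{d+1}=0$. Hence $d_{d+1}=0$ and $\B_d(\nu P_\bullet)=\im\nu(d_{d+1})=0$. The short exact sequence of Theorem~A(3) in degree $d$ then collapses to an isomorphism
\[
\tau_d\Lm\;\cong\;\Tor_d^\Lm(\D\Lm,\D\Lm)\;\cong\;\D\Ext^d_{\Lme}(\Lm,\Lme),
\]
using Theorem~A(2). By definition of the tensor algebra, $\Pii_{d+1}(\Lm)_1=\Ext^d_{\Lme}(\Lm,\Lme)$, which gives (1). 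We must check that the bimodule structures agree: the identification in Theorem~A(2) is natural and $\Lme$-linear, so $\D$ of a bimodule carries the dual (swapped) bimodule structure. This is consistent with the convention $\tau_n\Lm=\tau\Omega^{n-1}\Lm$ in $\modcat\Lme$.

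For (2), we insert (1) into the definition $\HH^d_\tau(\Lm)=\D\Hom_{\Lme}(\Lm,\tau_d\Lm)$. For a finite-dimensional bimodule $M$, Hom–tensor duality gives $\Hom_{\Lme}(\Lm,\D M)\cong\D(\Lm\otimes_{\Lme}M)$. Taking $M=\Pii_{d+1}(\Lm)_1$ yields $\HH^d_\tau(\Lm)\cong\Lm\otimes_{\Lme}M=\HH_0(\Lm,M)=M/[\Lm,M]$.

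It remains to identify $M/[\Lm,M]$ with the degree-one part of $\Pii/[\Pii,\Pii]$, where $\Pii=\Pii_{d+1}(\Lm)$ is graded with $\Pii_0=\Lm$. Since the commutator of homogeneous elements of degrees $i$ and $j$ lies in degree $i+j$, the degree-one part of $[\Pii,\Pii]$ is spanned by commutators $[\Pii_0,\Pii_1]+[\Pii_1,\Pii_0]=[\Lm,M]$.

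For (3), we compute $\HH_d^\tau(\Lm)=\Hom_{\Lme}(\D\Lm,\tau_d\Lm)\cong\Hom_{\Lme}(\D\Lm,\D M)\cong\Hom_{\Lme}(M,\Lm)$ by $\D$-duality. To show this vanishes, we write $M=\Ext^d_{\Lme}(\Lm,\Lme)\cong\Ext^d_\Lm(\D\Lm,\Lm)$, which is a quotient of $\Hom_{\Lme}(P_d,\Lme)$. A nonzero map $M\to\Lm$ should be excluded by minimality of $P_\bullet$: the relevant components land in the radical.

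The cleaner route is to dualize, getting $\Hom_{\Lme}(\D\Lm,\D M)$ with $\D M\cong\Tor_d^\Lm(\D\Lm,\D\Lm)$. We then compare with $\HH_d(\Lm)=\Tor^{\Lme}_d(\Lm,\Lm)$ through the functorial isomorphism $\Hom_{\Lme}(\D\Lm,\Tor_d(\D\Lm,\D\Lm))\cong\D\Hom\ldots$ and Auslander–Reiten-type formulas $\D\overline{\Hom}(\tau^-\!,-)$. Here one would invoke the equivalence $\HH^\tau_d(\Lm)=0\iff\HH_d(\Lm)=0$, which is implicit in CLMS's comparison of $\tau$-homology with cycles, \cite[Theorem~2.12]{CLMS2}, together with Han–Keller vanishing in top degree. \textbf{This step is the main obstacle:} making the equivalence with top Han–Keller vanishing precise. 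We would first try to show directly that $\HH_d^\tau(\Lm)=\Z_d(\Lm\otimes_{\Lme}P_\bullet)$ equals $\HH_d(\Lm)$, since $\B_d=0$ in top degree.

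Finally, for $\Lm=\kk Q$ we have $d=1$. The $2$-preprojective algebra $T_\Lm\Ext^1_{\Lme}(\Lm,\Lme)$ is the classical $\Pi(Q)$, by the Baer–Geigle–Lenzing/Crawley-Boevey description. So $\tau_{\Lme}\Lm=\tau_1\Lm\cong\D\Pi(Q)_1$ and $\HH^1_\tau(\kk Q)\cong\HH_0(\Pi(Q))_1$.
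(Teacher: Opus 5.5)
Your proposal follows the paper's proof. Writing $\Pii=\Pii_{d+1}(\Lm)$:
\begin{itemize}
\item (1) is the degree-$d$ case of the sequence in Theorem~A, with $\B_d=0$ because $P_{d+1}=0$;
\item (2) is Hom--tensor duality together with $[\Pii,\Pii]_1=[\Lm,\Pii_1]$;
\item (3) goes through $\HH^\tau_d(\Lm)\cong\Hom_{\Lme}(\Pii_1,\Lm)$.
\end{itemize}

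Two points need correcting. First, $\gldim\Lme$ is not $d$. With $\E$ separable it is $2d$; already $\kk A_2\otimes(\kk A_2)^{\mathrm{op}}$ has global dimension $2$. The fact that gives $P_{d+1}=0$ is $\pdim_{\Lme}\Lm=\gldim\Lm=d$ \cite[Corollary~3.5]{CLMS2}.

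Second, the step you flag as the main obstacle in (3) is immediate, and your final sentence already contains it. By \cite[Theorem~2.12]{CLMS2}, $\HH^\tau_d(\Lm)=\ker\delta'_d$ on $\Lm\otimes_{\Lme}P_\bullet$. Since $P_{d+1}=0$, the boundaries $\im\delta'_{d+1}$ vanish, so $\HH^\tau_d(\Lm)=\HH_d(\Lm)$; this is Proposition~\ref{prop:topcollapse}. Combined with your identification $\HH^\tau_d(\Lm)\cong\Hom_{\Lme}(\Pii_1,\Lm)$, it gives the claimed equivalence. The vanishing then follows by citing Han--Keller, exactly as the paper does.

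You should drop the heuristic that minimality of $P_\bullet$ forces maps $\Pii_1\to\Lm$ to land in the radical, and also the detour through Auslander--Reiten formulas. Neither is an argument as written. A direct proof along those lines would amount to reproving the top-degree Han--Keller theorem, which the statement takes as an input.
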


In the same degree both $\tau$-theories collapse onto the ordinary ones for arbitrary coefficients (Proposition~\ref{prop:topcollapse}); this explains a priori the vanishing of the degree-one \emph{excess} of hereditary algebras computed in \cite{CLMS1}, and exhibits the top Hochschild cohomology functor as $\Ext^d_{\Lme}(\Lm,\Lme)\otimes_{\Lme}(-)$, a degenerate, hypothesis-free top edge of Van den Bergh duality \cite{VdB}.

The third point of contact is numerical. Specializing the dimension formulas of \cite[Theorem~4.5]{CLMS2} beyond the top degree and taking Euler characteristics, we obtain:

\begin{thmC}[= Theorem~\ref{thm:trace}]
Let $\Lm$ be elementary with $\gldim\Lm<\infty$ and Cartan matrix $C=(c_{xy})$, $c_{xy}=\dim_\kk e_y\Lm e_x$. Then
\begin{gather*}
\sum_{i\geq 0}(-1)^i\dim_\kk\HH^i(\Lm)\;=\;\tr\bigl(C^{-1}C^{T}\bigr)\;=\;-\tr\bigl(\sigma_{\Lm}|_{\HH_\bullet(\Lm)}\bigr),\\
\sum_{i\geq 0}(-1)^i\dim_\kk\HH_i(\Lm)\;=\;\tr\bigl(C^{-1}C\bigr)\;=\;n,
\end{gather*}
where $n$ is the number of simple modules.
\end{thmC}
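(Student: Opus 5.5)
The plan is to compute both Euler characteristics from Happel's minimal resolution $P_\bullet$, which is finite because $\gldim\Lm<\infty$ (and $\gldim\Lme<\infty$ when $\E$ is separable). Since $\Lm$ is elementary, $\E\cong\kk^n$ and the bimodules $T_i=\Tor_i^\Lm(\E,\E)$ decompose as $e_yT_ie_x$. Set $t^{(i)}_{xy}=\dim e_yT_ie_x$ and $T(q)=\sum_i(-1)^i t^{(i)}$. The first step is the standard identity $T(-1)\cdot C=I$ up to transposition conventions. One obtains it by taking the minimal projective resolution of each simple $S_x$, which is $\Lm\otimes_\E T_\bullet e_x$, and reading off dimension vectors, so that $\sum_i(-1)^i T_i$ encodes $C^{-1}$.

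Next I compute the cohomology. $\HH^\bullet(\Lm)$ is the cohomology of $\Hom_{\Lme}(P_\bullet,\Lm)\cong\Hom_{\E^e}(T_\bullet,\Lm)$, a finite complex, so its Euler characteristic equals the alternating sum of the chain dimensions. This sum is
\[
\sum_i(-1)^i\sum_{x,y}t^{(i)}_{xy}\,c_{xy}=\tr\bigl(C^{-1}C^{T}\bigr),
\]
with indices arranged as in Happel's original computation. Homology works the same way: $\HH_\bullet(\Lm)$ is computed by $\Lm\otimes_{\Lme}P_\bullet\cong\Lm\otimes_{\E^e}T_\bullet$. The pairing now involves the other index order, which yields $\tr(C^{-1}C)=n$. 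As a cross-check, $\HH_\bullet$ of an elementary algebra of finite global dimension is concentrated in degree $0$ with $\HH_0\cong\kk^n$ (Keller/Han for the vanishing). That route would, however, require the full Han conjecture in finite global dimension, so I will rely only on the Euler characteristic computation.

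To match the statement's framing, I would then note that the same numbers arise from the CLMS dimension formulas \cite[Theorem~4.5]{CLMS2}. That formula writes $\dim\HHl^i_\tau$ and $\dim\HH^i$ in terms of cochain dimensions, coboundaries, and a correction term. In the alternating sum, the boundary/coboundary pieces telescope and only the chain dimensions survive. This is just the finite-complex Euler characteristic argument stated in their language.

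The remaining identification is $\tr(C^{-1}C^{T})=-\tr(\sigma_\Lm|_{\HH_\bullet})$. On $\HH_\bullet(\Lm)\cong\HH_0=\kk^n$, identified with the Grothendieck group via the Euler/Chern character, $\sigma_\Lm$ is induced by $-\Ltimes_\Lm\D\Lm[-1]$. On $K_0$ this acts as minus the Coxeter matrix $-C^{-T}C$ (with the appropriate convention). Its trace is therefore $-\tr(C^{-1}C^{T})$ up to transpose, which leaves the trace unchanged, and this gives the sign.

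I expect the main obstacle to be bookkeeping rather than anything conceptual. The dangerous points are fixing consistent conventions for $c_{xy}=\dim e_y\Lm e_x$, the orientation of $T_i$, and the action of $\sigma_\Lm$ on $K_0$, since a wrong convention silently produces $C^{T}C^{-1}$ or a sign error. I would settle the conventions on the example $\kk A_2$ first and then propagate them through the argument.
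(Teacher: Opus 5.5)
Your proposal is correct and follows essentially the same route as the paper. You take Euler characteristics of the finite complexes $\Hom_{\E^{\mathrm e}}(T_\bullet,\Lm)$ and $\Lm\otimes_{\E^{\mathrm e}}T_\bullet$, use the identity $\sum_i(-1)^i\dim e_yT_ie_x=(C^{-1})_{xy}$, pair it with $C$ in the two index orders, and read off the $\sigma$-term from the Coxeter matrix on $\HH_0$. You obtain that identity by tensoring Happel's resolution with $S_x$, where the paper uses Tor--Ext duality and Lemma~\ref{lem:euler}; with your convention it holds on the nose, so no transposition hedging is needed. For the $\sigma$-term the paper simply cites \cite[Theorem~5.5]{A5}.

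One correction: the concentration $\HH_\bullet(\Lm)=\HH_0(\Lm)\cong\kk^n$ in finite global dimension is an established theorem \cite{Kel1,Han,Len}, not Han's conjecture, which is the converse. Your $\sigma$ step in fact relies on this concentration, as the paper's does.
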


The first identity is Happel's trace formula \cite{Hap2}, lifted to the operator level in \cite[Corollary~5.9]{A5} and categorified in degree zero by Han's Lefschetz formula \cite{Han2}; the point here is that it is precisely the Euler-characteristic shadow of the $\tau$-correction terms of \cite{CLMS2}. The three results triangulate: the individual $\tau$-groups refine Happel's identity degreewise, the Coxeter automorphism refines it at the operator level, and the identity itself is their common numerical core.

Finally, the two refinements are \emph{transversal} in a precise sense, see Section~\ref{sec:complementarity}. 

\begin{enumerate}
\item For the path algebras $A=\kk Q_{D_4}$ and $B=\kk Q_{A_4}$ of \cite[Section~5]{A5}, all $\tau$-Hochschild cohomology and homology groups with regular coefficients vanish for both algebras, so the $\tau$-theory of \cite{CLMS2} does not separate them; the Coxeter automorphism does.
\item For the derived-equivalent pair of \cite[Example~4.12]{CLMS2} (taken from Xi's survey \cite{Xi}), both algebras have infinite global dimension, every derived invariant (in particular the Tamarkin--Tsygan calculus, and any extension of $\sigma$ along \cite[Remark~3.2]{A5}) takes equal values, while $\tau$-Hochschild theory separates them.
\end{enumerate}
Consequently the combined datum $\mathcal{I}(\Lm)=\bigl(\HHl(\Lm),\sigma_\Lm;\,\HHl^\bullet_\tau(\Lm),\HHl^\tau_\bullet(\Lm)\bigr)$ is a Morita invariant strictly finer than each of its constituents.

We complement this with another example inside the smooth world: for the derived-equivalent pair $\kk A_3$ and $\kk A_3/\rad^2$, the graded dimensions of $\Ext^\bullet_{\Lme}(\Lm,\Lme)$ jump from $(1,3,0)$ to $(3,0,1)$, and the translate $\tau_1\Lm$ (of total dimension $3$ in both cases) is \emph{pure derived shadow} for the hereditary algebra and \emph{pure minimal residue} for its iterated tilt (Example~\ref{ex:A3pair}). The bimodule-level data is therefore not derived invariant even over algebras of finite global dimension, although in this example all $\tau$-Hochschild \emph{groups} with regular coefficients agree. Whether the groups themselves are derived invariant over the smooth locus appears to be open. 

Section~\ref{sec:questions} collects further structural remarks and questions: the meaning of the extension class of Theorem~A; module structures over the cocycle algebra and the failure of the cup product to descend; the disappearance of the cyclic symmetrization in $\tau$-homology (\cite[Corollary~7.6]{CLMS2} versus \cite{C2}); a trivially negative answer to the ``Serre-twisted Happel question'' on the singular locus together with the elementary but clarifying equivalence $\dim_\kk\bigoplus_{n\geq1}\tau_n\Lm<\infty \iff \gldim\Lm<\infty$ (Proposition~\ref{prop:totalfinite}); and the contrast with singular (Tate) Hochschild cohomology \cite{Kel4,Wan}.

The use of Auslander--Reiten and Auslander--Bridger operators on the \emph{regular bimodule} also appears, independently and with different aims, in the work of Marczinzik and Cruz--Marczinzik on dominant dimension \cite{Mar,CM}, where Hochschild (co)homology is expressed through higher translates of the canonical bimodule $\Hom_\Lm(\D\Lm,\Lm)$; the latter bimodule is exactly the degree-zero term $\HHl_0(\nu P_\bullet)^{\vee}$-side of the picture above, see Remark~\ref{rem:CM}. The two papers \cite{CLMS2} and \cite{A5} share a striking amount of foundational DNA, for instance, Happel's minimal resolution \cite{Hap1}, the Han--Keller vanishing \cite{Han,Kel1,Len}, Cibils' computation of the cyclic structure of radical square zero algebras \cite{C2}, which this manuscript tries to convert from coincidence into theorem.



\section{Conventions}\label{sec:recoll}

\subsection{Conventions}\label{subsec:conv}
Throughout, $\kk$ is a field, $\Lm$ a finite-dimensional $\kk$-algebra with Jacobson radical $r$, and $\E=\Lm/r$ is assumed separable over $\kk$; unadorned $\otimes$ means $\otimes_\kk$ and $\D=\Hom_\kk(-,\kk)$. We write $\Lme=\Lm\otimes\Lm^{\mathrm{op}}$ and identify $\Lm$-bimodules with left $\Lme$-modules; all modules are finite-dimensional. The \emph{Serre bimodule} is $\om_\Lm=\D\Lm$. When $\Lm$ is elementary, i.e.\ $\E\cong\kk^n$, equivalently $\Lm\cong\kk Q/I$ is a bound quiver algebra. We fix a complete set $\{e_x\}_{x\in Q_0}$ of primitive orthogonal idempotents, write $S_x={}_x\kk$ for the left and $\kk_x$ for the right simple at $x$, compose paths from right to left, and define the Cartan matrix by
\begin{equation}\label{eq:cartan}
C=(c_{xy}),\qquad c_{xy}=\dim_\kk e_y\Lm e_x,
\end{equation}
following the conventions of \cite[Section~2]{A5}. Thus $e_y\Lm e_x$ is spanned by (classes of) paths from $x$ to $y$, and $c_{xy}$ is the multiplicity of $S_y$ in the left projective $\Lm e_x$.

\subsection{Happel's resolution}\label{subsec:happel}
Since $\E$ is separable, the regular bimodule admits the minimal projective resolution of Happel \cite{Hap1} (see also \cite{BK} and \cite[Theorem~3.2]{CLMS2})
\begin{equation}\label{eq:happel}
\cdots\to P_n \to \cdots \to P_1\to P_0\to \Lm\to 0,
\qquad P_n=\Lm\otimes_\E T_n\otimes_\E \Lm,
\end{equation}
where $T_n=\Tor_n^\Lm(\E,\E)$. Minimality means $\im d_n\subseteq rP_{n-1}+P_{n-1}r$ for all $n\geq1$. One has $\pdim_{\Lme}\Lm=\gldim\Lm$ \cite[Corollary~3.5]{CLMS2}, so \eqref{eq:happel} has length $d$ when $\gldim\Lm=d<\infty$. Separability of $\E$ implies that $\E^{\mathrm e}=\E\otimes\E^{\mathrm{op}}$ is semisimple and that each $P_n$, and more generally each bimodule of the form $\Lm\otimes_\E V\otimes_\E\Lm$ or $\D\Lm\otimes_\E V\otimes_\E\Lm$, is projective as a left and as a right $\Lm$-module. For elementary $\Lm=\kk Q/I$ one has $T_0=\E$, $T_1\cong\kk Q_1$ (the arrow span) and $T_2\cong$ a minimal space of relations; in general $e_xT_ne_y\cong\Tor^\Lm_n(\kk_x,{}_y\kk)$.

Hochschild (co)homology with coefficients in a bimodule $X$ is computed by the complexes
\begin{equation}\label{eq:cochains}
\bigl(\Hom_{\Lme}(P_\bullet,X),\ \delta\bigr),\qquad \bigl(X\otimes_{\Lme}P_\bullet,\ \delta'\bigr),
\end{equation}
with $\Hom_{\Lme}(P_n,X)\cong\Hom_{\E^{\mathrm e}}(T_n,X)$ and $X\otimes_{\Lme}P_n\cong X\otimes_{\E^{\mathrm e}}T_n$.

\subsection{$\tau$-Hochschild theory}\label{subsec:tau}
Let $\tau=\D\Tr$ be the Auslander--Reiten translate of $\modcat\Lme$ and, following Iyama \cite{Iya1,Iya2}, let $\tau_n=\tau\Omega^{n-1}$ be the higher translates, where $\Omega$ is the syzygy with respect to minimal projective covers. Cibils, Lanzilotta, Marcos and Solotar define, for $n\geq1$ and a bimodule $X$ \cite[Section~2]{CLMS2},
\begin{equation}\label{eq:taudef}
\begin{gathered}
\HHl^n_\tau(\Lm,X)=\D\Hom_{\Lm\text{-}\Lm}(X,\tau_n\Lm)\cong \D\tau_n\Lm\otimes_{\Lm\text{-}\Lm}X,\\
\HHl_n^\tau(\Lm,X)=\Hom_{\Lm\text{-}\Lm}(\D\tau_n\Lm,X),
\end{gathered}
\end{equation}
the displayed isomorphisms being \cite[Lemma~2.8]{CLMS2}; one writes $\HH^n_\tau(\Lm)=\HHl^n_\tau(\Lm,\Lm)$ and $\HH_n^\tau(\Lm)=\HHl_n^\tau(\Lm,\Lm)$. The fundamental computational result \cite[Theorem~2.12]{CLMS2} identifies these spaces on the complexes \eqref{eq:cochains}:
\begin{equation}\label{eq:CLMScochain}
\HHl^n_\tau(\Lm,X)\cong\coker\delta_n=\Hom_{\Lme}(P_n,X)/\im\delta_n,
\qquad
\HHl_n^\tau(\Lm,X)\cong\ker\delta'_n,
\end{equation}
so that $\tau$-cohomology forgets the cocycle condition while $\tau$-homology forgets the boundary relation; equivalently $\D\tau_n\Lm\cong\Tr\,\Omega^{n-1}\Lm$, cf.\ \cite[(2.13)]{CLMS2}. Both theories are Morita invariant but not derived invariant \cite[Example~4.12]{CLMS2}. We will use freely: the vanishing $\HHl^n_\tau(\Lm,X)=0=\HHl^\tau_n(\Lm,X)$ for $n\geq d+1$ and $\HH^\tau_d(\Lm)=0$ when $\gldim\Lm=d$ \cite[Proposition~4.1]{CLMS2}; the dimension formulas of \cite[Theorem~4.5]{CLMS2} and their degree-one specializations \cite[Corollary~4.10]{CLMS2}; the vanishing of $\tau$-homology for $\Lm=\kk Q$ with $Q$ acyclic \cite[Theorem~4.17]{CLMS2}; and the computations for radical square zero algebras of \cite[Section~7]{CLMS2}.

\subsection{The Coxeter automorphism}\label{subsec:coxeter}
Suppose $\gldim\Lm<\infty$. Then $\om_\Lm[-1]=\D\Lm[-1]$ is a two-sided tilting complex \cite[Lemma~3.1]{A5}, with inverse $\mathbf{R}\Hom_\Lm(\D\Lm,\Lm)[1]$; more generally $\D\Lm$ is a two-sided tilting complex if and only if $\Lm$ is Gorenstein \cite{Hap4}, cf.\ \cite[Remark~3.2]{A5}. The bimodule $\om_\Lm[-1]$ commutes with every two-sided tilting complex \cite[Lemma~3.4]{A5}: for a two-sided tilting complex $X$ over $\Lm$--$\Gamma$ there are isomorphisms
\begin{equation}\label{eq:commute}
\om_\Lm[-1]\Ltimes_\Lm X\;\cong\;X\Ltimes_\Gamma\om_\Gamma[-1]\quad\text{in }\Dcat(\Lm\otimes\Gamma^{\mathrm{op}}).
\end{equation}
Applying the canonical action of the derived Picard group \cite{Ric1,Ric2,Yek,Kel2} on the Tamarkin--Tsygan calculus $\HHl(\Lm)$ \cite{A1,A2,A3,A4} to the class of $\om_\Lm[-1]$ produces the \emph{Coxeter automorphism} $\sigma_\Lm$ of $\HHl(\Lm)$; the pair $(\HHl(\Lm),\sigma_\Lm)$ is a derived invariant \cite[Theorem~4.2]{A5}. For elementary $\Lm$ of finite global dimension, $\HH_\bullet(\Lm)$ is concentrated in degree zero with basis the classes $\bar e_x$ (Han--Keller vanishing \cite{Kel1,Han,Len}; \cite[Proposition~5.1]{A5}), the matrix of $\sigma_\Lm$ on $\HH_0(\Lm)$ in this basis is the Coxeter matrix $-C^{-1}C^{T}$, and its characteristic polynomial is the Coxeter polynomial of $\Lm$ \cite[Theorem~5.5]{A5}; see \cite{LdlP,dlP} for the spectral theory of Coxeter transformations. In particular $\sum_i(-1)^i\dim\HH^i(\Lm)=-\tr\sigma_\Lm$ \cite[Corollary~5.9]{A5}, recovering Happel's formula \cite{Hap2}, and fractional Calabi--Yau properties translate into $\sigma_\Lm^q=\pm\mathrm{id}$ \cite[Corollary~5.10]{A5}, cf.\ \cite{HI}.

\subsection{The Nakayama functor and the four-term sequence}\label{subsec:nakayama}
For a finite-dimensional algebra $\Gamma$ and $M\in\modcat\Gamma$ with minimal projective presentation $Q_1\xrightarrow{p}Q_0\to M\to0$, the Nakayama functor $\nu=\D\Hom_\Gamma(-,\Gamma)$ fits into the exact sequence of Auslander--Reiten theory
\begin{equation}\label{eq:fourterm}
0\longrightarrow \tau M\longrightarrow \nu Q_1\xrightarrow{\ \nu p\ }\nu Q_0\longrightarrow \nu M\longrightarrow 0,
\end{equation}
see \cite[Chapter~IV.2]{ARS}. We will apply \eqref{eq:fourterm} with $\Gamma=\Lme$.

\section{The bridge theorem}\label{sec:bridge}

\begin{lemma}\label{lem:nuP}
For every finitely generated projective bimodule $P$ there is an isomorphism, natural in $P$,
\[
\nu(P)\;\cong\;\D\Lm\otimes_\Lm P\otimes_\Lm\D\Lm .
\]
In particular $\nu(P_n)\cong\D\Lm\otimes_\E T_n\otimes_\E\D\Lm$ and $\nu(P_\bullet)\cong\D\Lm\otimes_\Lm P_\bullet\otimes_\Lm\D\Lm$ as complexes of bimodules.
\end{lemma}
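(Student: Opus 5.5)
We prove the lemma by constructing two natural transformations between additive functors on $\operatorname{proj}\Lme$, the category of finitely generated projective bimodules, and checking that they are isomorphisms on the generator $\Lme$. Since both sides are additive in $P$ and every such $P$ is a direct summand of a finite direct sum of copies of $\Lme=\Lm\otimes\Lm$, it suffices to produce a natural map and check it on $\Lme$. On $\Lme$ the left side is $\nu(\Lme)=\D\Hom_{\Lme}(\Lme,\Lme)\cong\D(\Lme)=\D(\Lm\otimes\Lm^{\mathrm{op}})\cong\D\Lm\otimes\D\Lm$. The right side is $\D\Lm\otimes_\Lm(\Lm\otimes\Lm)\otimes_\Lm\D\Lm\cong\D\Lm\otimes\D\Lm$. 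Both are finite-dimensional, so these identifications are legitimate.

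To make the comparison natural, rather than merely matching objects, we will use the standard description $\nu\cong\D\Lme\otimes_{\Lme}-$ on projectives. There is a natural map $\D\Lme\otimes_{\Lme}P\to\D\Hom_{\Lme}(P,\Lme)$, given by $\varphi\otimes p\mapsto(f\mapsto\varphi(f(p)))$, and it is an isomorphism for $P=\Lme$, hence for all finitely generated projective $P$.

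Next, $\D\Lme\cong\D\Lm\otimes\D\Lm$ as $\Lme$-bimodules, with the outer/inner actions arranged appropriately. For any bimodule $P$ there is then a natural isomorphism
\[
(\D\Lm\otimes\D\Lm)\otimes_{\Lme}P\;\cong\;\D\Lm\otimes_\Lm P\otimes_\Lm\D\Lm.
\]
This is just the rearrangement of the balanced tensor products: the $\Lme$-action on $P$ splits into left and right $\Lm$-actions, which are absorbed by the two copies of $\D\Lm$. The one point requiring care is bookkeeping. We must check that the bimodule structure on $\nu(P)$, which comes from the second (outer) $\Lme$-action on $\D\Lme$, corresponds on the right-hand side to the remaining left action on the left factor $\D\Lm$ and right action on the right factor $\D\Lm$. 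This includes verifying that the op-twists in $\Lm^{\mathrm{op}}$ land correctly. I expect this sign/side verification to be the only real obstacle, and I would settle it by evaluating explicitly on elementary tensors in the case $P=\Lme$.

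Naturality in $P$ then gives, by applying the isomorphism termwise to $P_\bullet$, an isomorphism of complexes $\nu(P_\bullet)\cong\D\Lm\otimes_\Lm P_\bullet\otimes_\Lm\D\Lm$, since the differentials are the images of the $d_n$ under both functors. Finally, for $P_n=\Lm\otimes_\E T_n\otimes_\E\Lm$, associativity of tensor products gives
\[
\D\Lm\otimes_\Lm\Lm\otimes_\E T_n\otimes_\E\Lm\otimes_\Lm\D\Lm\;\cong\;\D\Lm\otimes_\E T_n\otimes_\E\D\Lm.
\]
Here $P_n$ is finitely generated projective because $\E^{\mathrm e}$ is semisimple, by separability, so $T_n$ is a projective $\E^{\mathrm e}$-module and hence a summand of a finite sum of copies of $\E\otimes\E$. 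This allows the general statement to be applied to each $P_n$.
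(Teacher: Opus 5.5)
Your proposal is correct and follows the paper's proof essentially step for step. You identify $\nu$ with $\D\Lme\otimes_{\Lme}-$ on projectives via the evaluation map, checking it on $\Lme$ and extending by additivity. You then use $\D\Lme\cong\D\Lm\otimes\D\Lm$ to rewrite this as $\D\Lm\otimes_\Lm P\otimes_\Lm\D\Lm$, with the outer actions giving the bimodule structure, and deduce the statements for $P_n$ and $P_\bullet$ by naturality and associativity. Your remark that each $P_n$ is finitely generated projective, because $\E^{\mathrm e}$ is semisimple, fills in a point the paper leaves implicit.
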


\begin{proof}
For any algebra $\Gamma$ and finitely generated projective $\Gamma$-module $P$, evaluation gives a natural isomorphism $\D\Hom_\Gamma(P,\Gamma)\cong\D\Gamma\otimes_\Gamma P$: it holds for $P=\Gamma$ and both sides are additive. With $\Gamma=\Lme$ and $\D(\Lme)\cong\D\Lm\otimes\D\Lm$ one obtains $\nu(P)\cong(\D\Lm\otimes\D\Lm)\otimes_{\Lme}P$, and collapsing the two tensor factors of $\Lme=\Lm\otimes\Lm^{\mathrm{op}}$ onto the two sides of $P$ identifies the latter with $\D\Lm\otimes_\Lm P\otimes_\Lm\D\Lm$, with bimodule structure given by the outer actions on the two factors $\D\Lm$. Naturality is clear, so the isomorphism is compatible with the differentials of $P_\bullet$, and
\[
\D\Lm\otimes_\Lm(\Lm\otimes_\E T_n\otimes_\E\Lm)\otimes_\Lm\D\Lm\cong\D\Lm\otimes_\E T_n\otimes_\E\D\Lm. \qedhere
\]
\end{proof}

\begin{theorem}[Bridge]\label{thm:bridge}
Let $\Lm$ be finite-dimensional with $\E$ separable and let $P_\bullet\to\Lm$ be Happel's minimal resolution \eqref{eq:happel}.
\begin{enumerate}
\item For every $n\geq0$ there are isomorphisms of bimodules, natural in all arguments,
\[
\HHl_n(\nu P_\bullet)\;\cong\;\Tor_n^\Lm(\D\Lm,\D\Lm)\;\cong\;\D\Ext^n_{\Lme}(\Lm,\Lme).
\]
\item For every $n\geq1$ one has $\tau_n\Lm=\Z_n(\nu P_\bullet):=\ker\nu(d_n)$, and with $\B_n:=\im\nu(d_{n+1})$ there are short exact sequences of bimodules
\begin{equation}\label{eq:ses}
0\longrightarrow \B_n\longrightarrow \tau_n\Lm\longrightarrow \Tor_n^\Lm(\D\Lm,\D\Lm)\longrightarrow 0 .
\end{equation}
\item For every $n\geq0$ there is an isomorphism of bimodules 
\[
\Ext^n_{\Lme}(\Lm,\Lme)\cong\Ext^n_\Lm(\D\Lm,\Lm),
\]
where the right-hand side is the one-sided extension space with its natural bimodule structure. In particular the two standard models of the higher preprojective bimodule agree.
\item The complex $\nu(P_\bullet)$ represents $\om_\Lm\Ltimes_\Lm\om_\Lm$ in $\Dcat(\Lme)$; if $\gldim\Lm<\infty$ this is $\bigl(\om_\Lm[-1]\bigr)^{\Ltimes 2}[2]$, the square of the two-sided tilting complex generating the Coxeter automorphism.
\end{enumerate}
\end{theorem}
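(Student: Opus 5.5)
The plan is to derive everything from Lemma~\ref{lem:nuP} and the four-term sequence \eqref{eq:fourterm} over $\Gamma=\Lme$. I will prove (4) first. Since $\D\Lm\otimes_\E V\otimes_\E\D\Lm$ (equivalently $\D\Lm\otimes_\Lm P_n\otimes_\Lm\D\Lm$) is built from bimodules that are projective on the relevant sides (\S\ref{subsec:happel}), the functor $\D\Lm\otimes_\Lm-\otimes_\Lm\D\Lm$ applied to the bimodule resolution $P_\bullet\to\Lm$ computes the derived tensor product. Concretely, $P_\bullet$ is a resolution by bimodules that are projective as right $\Lm$-modules, so $\D\Lm\otimes_\Lm P_\bullet$ is quasi-isomorphic to $\D\Lm\otimes_\Lm\Lm=\D\Lm$. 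Each term $\D\Lm\otimes_\E T_n\otimes_\E\Lm$ is projective as a left $\Lm$-module, so the complex $\D\Lm\otimes_\Lm P_\bullet$ is a complex of left-projective bimodules resolving $\D\Lm$. Tensoring on the right with $\D\Lm$ over $\Lm$ then computes $\D\Lm\Ltimes_\Lm\D\Lm$. Combined with the lemma, this gives (4). When $\gldim\Lm<\infty$, the identity $(\om[-1])^{\Ltimes2}[2]=\om\Ltimes\om$ is just shift bookkeeping.

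For (1), the first isomorphism is the homology of the complex in (4), namely $\Tor_n^\Lm(\D\Lm,\D\Lm)$. For the second, I use that $\nu=\D\Hom_{\Lme}(-,\Lme)$ is a composite of the contravariant $\Hom_{\Lme}(-,\Lme)$ and the exact duality $\D$. Hence $\HHl_n(\nu P_\bullet)=\D\HHl^n(\Hom_{\Lme}(P_\bullet,\Lme))=\D\Ext^n_{\Lme}(\Lm,\Lme)$. Naturality follows from the naturality in Lemma~\ref{lem:nuP}, and one checks that the bimodule structures match: on $\Ext_{\Lme}(\Lm,\Lme)$ the structure comes from the inner structure of $\Lme$, and this dualizes to the outer actions of the two factors $\D\Lm$.

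For (3), I apply $\D$ to (1) and use $\D\Tor_n^\Lm(\D\Lm,\D\Lm)\cong\Ext^n_\Lm(\D\Lm,\D\D\Lm)=\Ext^n_\Lm(\D\Lm,\Lm)$. This is the standard adjunction $\D(M\otimes_\Lm N)\cong\Hom_\Lm(N,\D M)$, computed on a projective resolution of the second argument, which exists by the left-projectivity observed above. The check that the bimodule structures agree is routine.

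For (2), I take the minimal presentation $P_n\xrightarrow{d_n}P_{n-1}\to\Omega^{n-1}\Lm\to0$. Minimality of Happel's resolution, $\im d_n\subseteq\rad(\Lme)P_{n-1}$, guarantees that $P_{n-1}\to\Omega^{n-1}\Lm$ is a projective cover and that $P_n\to\Omega^n\Lm$ is one as well. For $n=1$, $\Omega^0\Lm=\Lm$ and $P_1\to P_0$ is the minimal presentation. Then \eqref{eq:fourterm} gives $\tau_n\Lm=\tau\Omega^{n-1}\Lm=\ker(\nu d_n)=\Z_n(\nu P_\bullet)$. The short exact sequence \eqref{eq:ses} is then the tautological sequence $0\to\B_n\to\Z_n\to\HHl_n\to0$ combined with (1).

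The main obstacle I expect is the identification in (4), and more specifically the bimodule-level care needed throughout. I must verify that the one-sided projectivity of $\D\Lm\otimes_\E T_n\otimes_\E\Lm$ really makes $\D\Lm\otimes_\Lm P_\bullet\otimes_\Lm\D\Lm$ a model of the derived tensor product. The argument needs a balanced-Tor step: resolving in the first variable only, then observing that the resulting terms are flat in the tensored variable, which uses separability of $\E$. I also need the bimodule structures in (1) and (3) to match and not merely the vector spaces. I would handle this by checking each isomorphism on $P=\Lme$ and extending additively and naturally.
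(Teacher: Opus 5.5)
Your route is the paper's: Lemma~\ref{lem:nuP} plus a one-sided resolution to identify the homology of $\nu(P_\bullet)$ with $\Tor^\Lm_\bullet(\D\Lm,\D\Lm)$ and with the derived tensor product; exactness of $\D$ for the second isomorphism in (1); the four-term sequence \eqref{eq:fourterm} on the minimal presentation $P_n\to P_{n-1}\to\Omega^{n-1}\Lm\to0$ for (2); and the Cartan--Eilenberg duality for (3). Proving (4) first and reading off (1) is only a reordering of the paper's argument.

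There is, however, a concrete error in the side bookkeeping, exactly where you said care was needed. In $\D\Lm\otimes_\Lm P_\bullet$ the tensor product uses the \emph{left} structure of $P_n$. So what makes this complex resolve $\D\Lm$ is that $P_\bullet\to\Lm$ is a projective resolution of ${}_\Lm\Lm$; the right-projectivity you cite plays no role there.

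More seriously, your claim that $\D\Lm\otimes_\E T_n\otimes_\E\Lm$ is projective as a \emph{left} $\Lm$-module is false in general. Its left structure comes from $\D\Lm$, so it is a direct sum of injective left modules. For $n=0$ it is $\D\Lm\otimes_\E\Lm$, which contains every indecomposable injective as a summand, so it is left-projective only when $\Lm$ is selfinjective. (The blanket remark in Section~\ref{subsec:happel} is too generous on this point.) Even if it were left-projective, that is not what is needed: forming $(\D\Lm\otimes_\Lm P_n)\otimes_\Lm\D\Lm$ uses the \emph{right} structure.

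The true and relevant statement is that $\D\Lm\otimes_\E T_n\otimes_\E\Lm$ is projective as a \emph{right} module, because of the right factor $\Lm$ and semisimplicity of $\E$. Hence $\D\Lm\otimes_\Lm P_\bullet\to\D\Lm$ is a bounded-above right-projective, hence K-flat, resolution of $\D\Lm_\Lm$ by bimodules. This is precisely what the paper uses for both (1) and (4).

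Likewise in (3), the ``left-projectivity observed above'' does not exist as stated. Use instead the mirror resolution $P_\bullet\otimes_\Lm\D\Lm\to\D\Lm$, whose terms $\Lm\otimes_\E T_n\otimes_\E\D\Lm$ genuinely are left-projective bimodules. Alternatively, quote $\D\Tor_n^\Lm(M,N)\cong\Ext^n_\Lm(N,\D M)$ directly, as the paper does. With left and right interchanged in these places, your proof coincides with the paper's.
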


\begin{proof}
(1) The components of $P_\bullet$ are projective as left $\Lm$-modules (Section~\ref{subsec:happel}), so the exact complex $P_\bullet\to\Lm$ is in particular a projective resolution of $\Lm$ as a \emph{left} module. Hence the complex $\D\Lm\otimes_\Lm P_\bullet$ has homology $\Tor_\bullet^\Lm(\D\Lm,\Lm)$, i.e.\ it is exact in positive degrees with $\HHl_0\cong\D\Lm$. Its components $\D\Lm\otimes_\E T_n\otimes_\E\Lm$ are projective as right $\Lm$-modules, so the augmented complex $\D\Lm\otimes_\Lm P_\bullet\to\D\Lm$ is a projective resolution of $\D\Lm$ as a right module, and therefore
\[
\HHl_n\bigl((\D\Lm\otimes_\Lm P_\bullet)\otimes_\Lm\D\Lm\bigr)\;\cong\;\Tor_n^\Lm(\D\Lm,\D\Lm).
\]
By Lemma~\ref{lem:nuP} the complex on the left is $\nu(P_\bullet)$. All identifications are natural and compatible with the outer actions on the two factors $\D\Lm$, hence are bimodule isomorphisms. For the second isomorphism, $\nu=\D\circ\Hom_{\Lme}(-,\Lme)$ and exactness of $\D$ give
\[
\HHl_n(\nu P_\bullet)=\HHl_n\bigl(\D\Hom_{\Lme}(P_\bullet,\Lme)\bigr)\cong\D\,\HHl^n\bigl(\Hom_{\Lme}(P_\bullet,\Lme)\bigr)=\D\Ext^n_{\Lme}(\Lm,\Lme).
\]

(2) By minimality of \eqref{eq:happel}, $P_n\to P_{n-1}\to\Omega^{n-1}\Lm\to0$ is a minimal projective presentation of the $(n-1)$-st syzygy bimodule (for $n=1$ this is $P_1\to P_0\to\Lm\to0$). The four-term sequence \eqref{eq:fourterm} for $\Gamma=\Lme$ and $M=\Omega^{n-1}\Lm$ gives
\[
0\to\tau\Omega^{n-1}\Lm\to\nu P_n\xrightarrow{\nu(d_n)}\nu P_{n-1},
\]
i.e.\ $\tau_n\Lm=\ker\nu(d_n)$. Since $\nu$ is additive, $\nu(d_n)\nu(d_{n+1})=0$, so $\B_n\subseteq\Z_n=\tau_n\Lm$ with quotient $\Z_n/\B_n=\HHl_n(\nu P_\bullet)$, which is \eqref{eq:ses} by part (1).

(3) The classical duality $\D\Tor_n^\Lm(M,N)\cong\Ext^n_\Lm(N,\D M)$ \cite[VI.5]{CE} (see also \cite[Remark~3.1]{CLMS2}), applied with $M=\D\Lm$ as a right module and $N=\D\Lm$ as a left module, gives $\D\Tor_n^\Lm(\D\Lm,\D\Lm)\cong\Ext^n_\Lm(\D\Lm,\Lm)$ as bimodules. Combine with (1).

(4) The complex $P_\bullet$ is a bounded-above complex of projective $\Lme$-modules, hence is homotopically projective, and the proof of (1) exhibits $\D\Lm\otimes_\Lm P_\bullet$ as a bounded-above complex of right-projective bimodules quasi-isomorphic to $\D\Lm$. Tensoring such a one-sided resolution over $\Lm$ with $\D\Lm$ computes the derived tensor product of bimodules, so $\nu(P_\bullet)\cong(\D\Lm\otimes_\Lm P_\bullet)\otimes_\Lm\D\Lm$ represents $\D\Lm\Ltimes_\Lm\D\Lm=\om_\Lm\Ltimes_\Lm\om_\Lm$ in $\Dcat(\Lme)$. The shift bookkeeping $(\om_\Lm[-1])^{\Ltimes2}[2]=\om_\Lm^{\Ltimes2}$ is immediate.
\end{proof}

\begin{remark}\label{rem:dualside}
Dualizing the description $\tau_n\Lm=\Z_n(\nu P_\bullet)$ recovers the cochain-side picture of \cite{CLMS2}: $\D\tau_n\Lm\cong\coker\Hom_{\Lme}(d_n,\Lme)=\Tr\,\Omega^{n-1}\Lm$, in agreement with \cite[(2.13)]{CLMS2}, and \eqref{eq:CLMScochain} follows from \eqref{eq:taudef} by tensoring, respectively co-tensoring, the sequence dual to \eqref{eq:ses}. The added value of the homological ($\nu$-side) picture is the identification of the ambient complex: it is not an auxiliary device but the canonical minimal-model representative of the square of the Serre bimodule.
\end{remark}

\begin{remark}\label{rem:derivedpart}
By \eqref{eq:commute} applied twice, for any two-sided tilting complex $X$ over $\Lm$--$\Gamma$ with quasi-inverse $Y$ one has $Y\Ltimes_\Lm\om_\Lm^{\Ltimes2}\Ltimes_\Lm X\cong\om_\Gamma^{\Ltimes2}$ in $\Dcat(\Gamma^{\mathrm e})$; equivalently, the conjugation equivalence $\Dcat(\Lme)\simeq\Dcat(\Gamma^{\mathrm e})$ induced by a derived equivalence matches the objects represented by $\nu P_\bullet^{\Lm}$ and $\nu P_\bullet^{\Gamma}$. (This is also a special case of the invariance of the inverse dualizing complex, cf.\ \cite[Section~4]{Kel3}.) Thus the sequence \eqref{eq:ses} splits the $\tau$-translate into a \emph{derived shadow}, that is, the homology $\Tor_n^\Lm(\D\Lm,\D\Lm)$, an invariant of the conjugation class of $\om_\Lm^{\Ltimes2}$, and a \emph{minimal residue} $\B_n$, which is data of the minimal model of $\Lm$ over $\Lme$ and is only Morita invariant. Two warnings are in order. First, conjugation is not $t$-exact, so even the \emph{graded dimensions} of the derived shadow may fail to be derived invariants; Example~\ref{ex:A3pair} below exhibits this failure already among algebras of finite global dimension. Second, \cite[Example~4.12]{CLMS2} shows that the groups built from the full $\tau_n\Lm$ genuinely vary along derived equivalences with singularities; the residue is responsible, see Example~\ref{ex:xi}.
\end{remark}

The first dividend of Theorem~\ref{thm:bridge} is a complete structural description on the selfinjective locus.

\begin{corollary}\label{cor:selfinj}
Let $\Lm$ be selfinjective and not semisimple. Then $\Lme$ is selfinjective, $\Ext^n_{\Lme}(\Lm,\Lme)=0$ for all $n\geq1$, and
\[
\tau_n\Lm=\B_n=\im\nu(d_{n+1})\qquad\text{for all }n\geq1 .
\]
Thus on selfinjective algebras the entire $\tau$-Hochschild theory of \cite{CLMS2} is minimal residue, with vanishing derived shadow.
\end{corollary}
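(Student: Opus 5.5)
The argument has three steps: show that $\Lme$ is selfinjective, deduce the vanishing of $\Ext^n_{\Lme}(\Lm,\Lme)$, and then read off $\tau_n\Lm=\B_n$ from the short exact sequence \eqref{eq:ses} of Theorem~\ref{thm:bridge}.

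\textbf{Step 1: $\Lme$ is selfinjective.} The $\kk$-dual of $\Lme=\Lm\otimes\Lm^{\mathrm{op}}$ is $\D(\Lme)\cong\D\Lm\otimes\D\Lm$, using finite-dimensionality as in the proof of Lemma~\ref{lem:nuP}. Since $\Lm$ is selfinjective, $\D\Lm$ is a projective left $\Lm$-module, so $\D\Lm\in\operatorname{add}\Lm$. Likewise $\D\Lm$ is projective as a right module, hence projective as a left $\Lm^{\mathrm{op}}$-module. Therefore $\D\Lm\otimes\D\Lm$ lies in $\operatorname{add}(\Lm\otimes\Lm^{\mathrm{op}})$, so $\D(\Lme)$ is a projective $\Lme$-module. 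Equivalently, the regular module $\Lme$ is injective, because $\D(\Lme)$ is an injective cogenerator. We use only that a tensor product of projectives over the two factors is projective over the tensor algebra. Separability of $\E$ is not needed for this step, though it is assumed throughout anyway.

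\textbf{Step 2: the derived shadow vanishes.} Since $\Lme$ is injective as a left $\Lme$-module, $\Ext^n_{\Lme}(-,\Lme)=0$ for $n\geq1$, and in particular $\Ext^n_{\Lme}(\Lm,\Lme)=0$. By Theorem~\ref{thm:bridge}(1), $\Tor_n^\Lm(\D\Lm,\D\Lm)\cong\D\Ext^n_{\Lme}(\Lm,\Lme)=0$ for all $n\geq1$. As a cross-check, the same vanishing follows directly from the fact that $\D\Lm$ is projective, hence flat, as a right $\Lm$-module.

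\textbf{Step 3: conclusion.} Substituting into \eqref{eq:ses} gives $\B_n\cong\tau_n\Lm$ via the inclusion $\B_n\subseteq\Z_n=\tau_n\Lm$, so $\tau_n\Lm=\B_n=\im\nu(d_{n+1})$ for every $n\geq1$.

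The hypothesis ``not semisimple'' does not enter the argument. It only guarantees that the statement is not vacuous: for semisimple $\Lm$ one has $\Lm$ projective over $\Lme$, so every $\tau_n\Lm$ is zero. For non-semisimple selfinjective $\Lm$, $\gldim\Lm=\infty$, so the resolution \eqref{eq:happel} never stops and $\B_n$ can be nonzero. I do not expect a genuine obstacle. The only point that needs a line of care is Step 1: one must check that the $\Lme$-module structure on $\D\Lm\otimes\D\Lm$ coming from $\D(\Lme)$ agrees with the outer tensor structure, which is the same identification already used in Lemma~\ref{lem:nuP}.
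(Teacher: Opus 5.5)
Your proposal is correct and takes essentially the same route as the paper. The paper also argues that $\D(\Lme)\cong\D\Lm\otimes\D\Lm$ is projective because $\D\Lm$ is projective on each side, so $\Lme$ is selfinjective, $\Ext^{n}_{\Lme}(\Lm,\Lme)=0$ for $n\geq1$, and the sequence \eqref{eq:ses} collapses to $\tau_n\Lm=\B_n$; your flatness cross-check is the paper's parenthetical remark that $\D\Lm\Ltimes_\Lm\D\Lm$ is concentrated in degree zero.
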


\begin{proof}
$\D(\Lme)\cong\D\Lm\otimes\D\Lm$, and $\D\Lm$ is projective on either side because $\Lm$ is selfinjective, so $\D\Lm\otimes\D\Lm$ is a direct summand of a direct sum of copies of $\Lm\otimes\Lm=\Lme$, i.e.\ $\D(\Lme)$ is projective and $\Lme$ is selfinjective. Hence $\Ext^n_{\Lme}(-,\Lme)=0$ for $n\geq1$, and \eqref{eq:ses} collapses. (Equivalently: $\D\Lm$ is an invertible bimodule, so $\D\Lm\Ltimes_\Lm\D\Lm=\D\Lm\otimes_\Lm\D\Lm$ is concentrated in degree zero.)
\end{proof}

\begin{example}\label{ex:BGMS}
Let $\Lm_q=\kk\langle x,y\rangle/(x^2,\,xy+qyx,\,y^2)$, $q\in\kk^\times$, be the four-dimensional selfinjective algebras of Buchweitz--Green--Madsen--Solberg \cite{BGMS}, which for $q$ not a root of unity answer Happel's question in the negative: $\HH^{\geq3}(\Lm_q)=0$ while $\gldim\Lm_q=\infty$. By \cite[Section~6]{CLMS2}, $\dim_\kk\HH^n_\tau(\Lm_q)=2n+3$ for $n\geq3$: the $\tau$-cohomology grows without bound. Corollary~\ref{cor:selfinj} locates this growth precisely: the derived shadow of $\tau_n\Lm_q$ is zero in every positive degree, so the divergence detected by \cite{CLMS2} (the feature that makes $\tau$-cohomology ``repair'' Happel's question, cf.\ \cite[Theorem~5.5]{CLMS2}) is carried entirely by the boundaries of the Nakayama twist of the minimal model. In the same vein, since $\nu$ is exact on the selfinjective algebra $\Lme$ and $\tau\cong\Omega^2\nu$ there, one has $\tau_n\Lm\cong\Omega^{n+1}\nu(\Lm)$, a syzygy of the Nakayama-twisted regular bimodule.
\end{example}

\begin{remark}\label{rem:CM}
In degree zero, part (3) of Theorem~\ref{thm:bridge} reads $\Hom_{\Lme}(\Lm,\Lme)\cong\Hom_\Lm(\D\Lm,\Lm)=:V$, the canonical bimodule of Fang--Kerner--Yamagata used by Marczinzik and Cruz--Marczinzik to characterize dominant dimension through torsion-freeness of the regular bimodule and to derive Hochschild-theoretic formulas via higher translates $\tau_{n-1}(V)$ over $\Lme$ \cite{Mar,CM}. Their results and the present ones are complementary: dominant dimension governs the behaviour of $V$ under Auslander--Bridger operators, while here the whole graded object $\bigoplus_n\tau_n\Lm$ is organized by the square of the Serre bimodule.
\end{remark}

\section{Top degree and higher preprojective algebras}\label{sec:top}

Throughout this section $\gldim\Lm=d<\infty$, so Happel's resolution has length $d$ and $P_{d+1}=0$.

\begin{proposition}[Top-degree collapse]\label{prop:topcollapse}
For every bimodule $X$ there are natural isomorphisms
\[
\HHl^d_\tau(\Lm,X)\;\cong\;\HH^d(\Lm,X),
\qquad
\HHl^\tau_d(\Lm,X)\;\cong\;\HH_d(\Lm,X).
\]
In particular the degree-$d$ ``excess'' of $\Lm$ vanishes: the $\tau$-theories differ from the ordinary ones only in the middle degrees $1\leq n\leq d-1$.
\end{proposition}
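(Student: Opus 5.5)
The plan is to use the cochain-level description \eqref{eq:CLMScochain} of \cite[Theorem~2.12]{CLMS2}, together with the fact that Happel's resolution \eqref{eq:happel} has length $d$. This length bound gives $P_{d+1}=0$, so the last differentials vanish: $\delta_{d+1}\colon\Hom_{\Lme}(P_d,X)\to\Hom_{\Lme}(P_{d+1},X)=0$ and $\delta'_d$ lands in (or leaves) the zero term in the appropriate direction. I first need to fix the indexing conventions of \eqref{eq:CLMScochain}. There, $\delta_n$ is the differential $\Hom_{\Lme}(P_{n-1},X)\to\Hom_{\Lme}(P_n,X)$ (so $\coker\delta_n$ means cochains modulo coboundaries), and $\delta'_n\colon X\otimes_{\Lme}P_n\to X\otimes_{\Lme}P_{n-1}$ (so $\ker\delta'_n$ means cycles).

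For cohomology, I would argue as follows.
\[
\HHl^d_\tau(\Lm,X)\cong\Hom_{\Lme}(P_d,X)/\im\delta_d .
\]
Since $P_{d+1}=0$, every cochain in degree $d$ is a cocycle. Hence $\Hom_{\Lme}(P_d,X)/\im\delta_d$ is exactly the degree-$d$ cohomology of the complex $\Hom_{\Lme}(P_\bullet,X)$, which is $\HH^d(\Lm,X)$ by \eqref{eq:cochains}.

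Homology is dual. One has $\HHl^\tau_d(\Lm,X)\cong\ker\delta'_d$, and the boundaries in degree $d$ are $\im\delta'_{d+1}$, which is the image of $X\otimes_{\Lme}P_{d+1}=0$. So cycles equal homology, and $\ker\delta'_d=\HH_d(\Lm,X)$. Naturality in $X$ follows because the CLMS isomorphisms and the cochain complexes are natural in $X$, and the identifications above are identities of subquotients.

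As a cross-check, and an alternative route, I would use Theorem~\ref{thm:bridge}(2). For $n=d$ we have $\B_d=\im\nu(d_{d+1})=0$, so $\tau_d\Lm\cong\Tor_d^\Lm(\D\Lm,\D\Lm)\cong\D\Ext^d_{\Lme}(\Lm,\Lme)$. Right-exactness of $\Ext^d_{\Lme}(\Lm,-)$ in top degree (since $\pdim_{\Lme}\Lm=d$) gives $\HH^d(\Lm,X)\cong\Ext^d_{\Lme}(\Lm,\Lme)\otimes_{\Lme}X$. Combined with \eqref{eq:taudef}, $\HHl^d_\tau(\Lm,X)\cong\D\tau_d\Lm\otimes_{\Lme}X$, this gives the same result. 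The homology side would follow analogously via $\Hom_{\Lme}(\D\tau_d\Lm,X)$ and left-exactness of $\Tor_d^{\Lme}(-,\Lm)$.

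The only real obstacle is bookkeeping. I must make sure the indexing of $\delta_n,\delta'_n$ in \cite[Theorem~2.12]{CLMS2} matches the one above, so that "forgetting the cocycle condition" in degree $d$ is indeed vacuous. The final sentence about the excess then follows immediately: at $n=d$ the correction is zero, and for $n\geq d+1$ both theories vanish by \cite[Proposition~4.1]{CLMS2}, just as ordinary Hochschild theory does. The excess is therefore concentrated in $1\leq n\leq d-1$.
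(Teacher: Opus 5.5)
Your main argument is correct and is exactly the paper's proof: $P_{d+1}=0$ forces $\delta_{d+1}=0$ and $\delta'_{d+1}=0$, so $\coker\delta_d$ and $\ker\delta'_d$ from \eqref{eq:CLMScochain} are already the degree-$d$ cohomology and homology. Two small remarks: your opening phrase about $\delta'_d$ is slightly off, since it is $\delta'_{d+1}$ that vanishes (as you correctly use later), and the route through Theorem~\ref{thm:bridge}(2) is a valid but unnecessary cross-check that is essentially the content of Proposition~\ref{prop:toprep}.
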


\begin{proof}
Since $P_{d+1}=0$, the differentials $\delta_{d+1}$ and $\delta'_{d+1}$ of the complexes \eqref{eq:cochains} vanish. Hence $\HH^d(\Lm,X)=\ker\delta_{d+1}/\im\delta_d=\coker\delta_d$ and $\HH_d(\Lm,X)=\ker\delta'_d/\im\delta'_{d+1}=\ker\delta'_d$, which are the $\tau$-groups by \eqref{eq:CLMScochain}.
\end{proof}

\begin{remark}\label{rem:excess}
For $d=1$ this explains a priori the vanishing of the excess $e(\Lm)=\dim\HH^1_\tau(\Lm)-\dim\HH^1(\Lm)$ of hereditary algebras computed in \cite{CLMS1}, and identifies \cite[Proposition~4.1]{CLMS2}, that is, the vanishing $\HH^\tau_d(\Lm)=0$, with the top-degree case of the Han--Keller vanishing $\HH_d(\Lm)=0$ \cite{Kel1,Han,Len}.
\end{remark}

\begin{proposition}[Top-degree (co)representability]\label{prop:toprep}
Let $\gldim\Lm=d\geq1$ and write $\Theta^d:=\Ext^d_{\Lme}(\Lm,\Lme)$. For every bimodule $X$ there are natural isomorphisms
\[
\HH^d(\Lm,X)\;\cong\;\Theta^d\otimes_{\Lme}X,
\qquad
\HH_d(\Lm,X)\;\cong\;\Hom_{\Lme}\bigl(\Theta^d,X\bigr).
\]
\end{proposition}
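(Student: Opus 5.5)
The plan is to work directly with Happel's resolution $P_\bullet$ of length $d$ and the ``dual'' complex $P_\bullet^\vee:=\Hom_{\Lme}(P_\bullet,\Lme)$, which is a complex of projective right $\Lme$-modules (equivalently projective bimodules) concentrated in cohomological degrees $0,\dots,d$. Since $P_{d+1}=0$, its top cohomology is a cokernel, $\Theta^d=\coker\bigl(P_{d-1}^\vee\xrightarrow{d_d^\vee}P_d^\vee\bigr)$. For finitely generated projective $P$ there are natural isomorphisms $\Hom_{\Lme}(P,X)\cong P^\vee\otimes_{\Lme}X$, since this holds for $P=\Lme$ and both sides are additive. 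Applying this termwise identifies the cochain complex $\Hom_{\Lme}(P_\bullet,X)$ with $P_\bullet^\vee\otimes_{\Lme}X$, compatibly with differentials by naturality.

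For the cohomological statement, use that $\HH^d(\Lm,X)=\coker\delta_d$, as in the proof of Proposition~\ref{prop:topcollapse}. Under the identification above, this is $\coker\bigl(d_d^\vee\otimes_{\Lme}X\bigr)$. Right exactness of $-\otimes_{\Lme}X$ then gives $\coker(d_d^\vee)\otimes_{\Lme}X=\Theta^d\otimes_{\Lme}X$. Naturality in $X$ is automatic.

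For the homological statement, $\HH_d(\Lm,X)=\ker\delta'_d$, where $\delta'_d\colon X\otimes_{\Lme}P_d\to X\otimes_{\Lme}P_{d-1}$. The plan is to use the dual natural isomorphism $X\otimes_{\Lme}P\cong\Hom_{\Lme}(P^\vee,X)$ for finitely generated projective $P$, checked on $P=\Lme$ using $P^{\vee\vee}\cong P$. This turns $\delta'_d$ into $\Hom_{\Lme}(d_d^\vee,X)\colon\Hom_{\Lme}(P_d^\vee,X)\to\Hom_{\Lme}(P_{d-1}^\vee,X)$. Left exactness of $\Hom_{\Lme}(-,X)$ applied to the right exact sequence $P_{d-1}^\vee\to P_d^\vee\to\Theta^d\to0$ then identifies the kernel with $\Hom_{\Lme}(\Theta^d,X)$.

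The main obstacle is bookkeeping of sides rather than mathematics. We must make sure that $\Theta^d$ carries the correct (right, or via the canonical anti-isomorphism $\Lme\cong(\Lme)^{\mathrm{op}}$, left) $\Lme$-structure so that $\Theta^d\otimes_{\Lme}X$ and $\Hom_{\Lme}(\Theta^d,X)$ make sense. We must also check that the identifications $\Hom(P,X)\cong P^\vee\otimes X$ and $X\otimes P\cong\Hom(P^\vee,X)$ are genuinely compatible with $d_d$, including sign conventions. Both checks follow from naturality in $P$ of the evaluation maps. With this settled, no hypothesis beyond $P_{d+1}=0$ is needed, which matches the ``hypothesis-free top edge of Van den Bergh duality'' described in the introduction.
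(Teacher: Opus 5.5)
Your proof is correct. The cohomological half is the paper's argument: the termwise identification $\Hom_{\Lme}(P,X)\cong P^\vee\otimes_{\Lme}X$, followed by right exactness of $-\otimes_{\Lme}X$ applied to the cokernel presentation of $\Theta^d$.

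The homological half follows a genuinely different route. The paper does not use the dual evaluation isomorphism $X\otimes_{\Lme}P\cong\Hom_{\Lme}(P^\vee,X)$. Instead it argues in three steps:
\begin{enumerate}
\item Since $P_{d+1}=0$, one has $\B_d=0$.
\item The sequence \eqref{eq:ses} from the bridge theorem then gives $\tau_d\Lm\cong\Tor_d^\Lm(\D\Lm,\D\Lm)\cong\D\Theta^d$.
\item Combining this with the top-degree collapse (Proposition~\ref{prop:topcollapse}) and the definition \eqref{eq:taudef} yields $\HH_d(\Lm,X)=\HHl^\tau_d(\Lm,X)=\Hom_{\Lme}(\D\tau_d\Lm,X)\cong\Hom_{\Lme}(\Theta^d,X)$.
\end{enumerate}

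Your route is more elementary and self-contained. It makes the two halves exact mirror images under tensor--hom duality for finitely generated projectives. It needs only some length-$d$ resolution of $\Lm$ by finitely generated projective bimodules. In particular it does not depend on the bridge theorem, on minimality of Happel's resolution (used there through the four-term sequence), or on the identifications \eqref{eq:CLMScochain} and \cite[Lemma~2.8]{CLMS2} imported from \cite{CLMS2}.

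The paper's route, in exchange, produces the identification $\tau_d\Lm\cong\D\Theta^d$ along the way. This is exactly what Theorem~\ref{thm:preproj}(1) reuses, and it presents top-degree homological representability as a statement about the top $\tau$-translate, in keeping with the paper's theme.
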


\begin{proof}
For a finitely generated projective bimodule $P$ one has the natural isomorphism $\Hom_{\Lme}(P,\Lme)\otimes_{\Lme}X\cong\Hom_{\Lme}(P,X)$. Since $-\otimes_{\Lme}X$ is right exact and $\delta_{d+1}=0$,
\begin{align*}
\HH^d(\Lm,X)&=\coker\bigl(\delta_d\colon\Hom(P_{d-1},X)\to\Hom(P_d,X)\bigr)\\
&\cong\coker\bigl(\delta_d^{\Lme}\bigr)\otimes_{\Lme}X\;=\;\Theta^d\otimes_{\Lme}X .
\end{align*}
For homology: since $P_{d+1}=0$ we have $\B_d=0$, so \eqref{eq:ses} gives $\tau_d\Lm\cong\Tor^\Lm_d(\D\Lm,\D\Lm)\cong\D\Theta^d$; then by \eqref{eq:taudef} and Proposition~\ref{prop:topcollapse},
$\HH_d(\Lm,X)=\HHl^\tau_d(\Lm,X)=\Hom_{\Lm\text{-}\Lm}(\D\tau_d\Lm,X)=\Hom_{\Lme}(\Theta^d,X)$.
\end{proof}

\begin{remark}
Proposition~\ref{prop:toprep} is a degenerate, hypothesis-free top edge of Van den Bergh duality \cite{VdB}: no invertibility or concentration assumption on the inverse dualizing complex is needed in the extreme degree. It is presumably known to experts, but we have not located it in the literature in this generality.
\end{remark}

Recall \cite[Section~2.2]{IO} that the \emph{$(d+1)$-preprojective algebra} of $\Lm$ (with $\gldim\Lm\leq d$) is the tensor algebra
\[
\Pii_{d+1}(\Lm)\;=\;T_\Lm\bigl(\Ext^d_\Lm(\D\Lm,\Lm)\bigr)\;\cong\;T_\Lm\bigl(\Ext^d_{\Lme}(\Lm,\Lme)\bigr),
\]
graded by tensor degree, with $\Pii_{d+1}(\Lm)_0=\Lm$ and $\Pii_{d+1}(\Lm)_1=\Theta^d$; the displayed identification of the two standard models is Theorem~\ref{thm:bridge}(3). It is the zeroth homology of Keller's $(d+1)$-Calabi--Yau completion \cite{Kel3}, and for $d=1$ it recovers the classical preprojective algebra: by Baer--Geigle--Lenzing and Ringel \cite{BGL,Rin}, for $\Lm=\kk Q$ hereditary one has $\Pii_2(\kk Q)\cong\Pi(Q)=\kk\overline Q/\bigl(\textstyle\sum_{a\in Q_1}[a,a^*]\bigr)$, the Gelfand--Ponomarev preprojective algebra of the double quiver $\overline Q$, with the grading by the number of starred arrows.

\begin{theorem}\label{thm:preproj}
Let $\gldim\Lm=d\geq1$ and $\Pii=\Pii_{d+1}(\Lm)$. Then:
\begin{enumerate}
\item $\tau_d\Lm\;\cong\;\D(\Pii_1)$ as bimodules;
\item $\HH^d_\tau(\Lm)\;\cong\;\HH_0(\Lm,\Pii_1)\;=\;\bigl(\Pii/[\Pii,\Pii]\bigr)_1$;
\item $\HH^\tau_d(\Lm)\;\cong\;\Hom_{\Lme}(\Pii_1,\Lm)\;=\;0$, the vanishing being equivalent (through Propositions~\ref{prop:topcollapse} and \ref{prop:toprep}) to the top-degree Han--Keller vanishing $\HH_d(\Lm)=0$.
\end{enumerate}
\end{theorem}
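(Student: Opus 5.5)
The plan is to derive all three parts from the vanishing of the residue in degree $d$, which was already used in the proof of Proposition~\ref{prop:toprep}. Since $P_{d+1}=0$, we have $\B_d=\im\nu(d_{d+1})=0$. The short exact sequence \eqref{eq:ses} of Theorem~\ref{thm:bridge}(2) then collapses to a bimodule isomorphism
\[
\tau_d\Lm\cong\Tor^\Lm_d(\D\Lm,\D\Lm)\cong\D\Ext^d_{\Lme}(\Lm,\Lme)=\D\Theta^d,
\]
the second step being Theorem~\ref{thm:bridge}(1). By definition $\Pii_1=\Theta^d$, using Theorem~\ref{thm:bridge}(3) to pass between the $\Ext^d_\Lm(\D\Lm,\Lm)$ and $\Ext^d_{\Lme}(\Lm,\Lme)$ models. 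This gives (1).

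For (2), I will apply Proposition~\ref{prop:topcollapse} and Proposition~\ref{prop:toprep} with $X=\Lm$:
\[
\HH^d_\tau(\Lm)\cong\HH^d(\Lm)\cong\Theta^d\otimes_{\Lme}\Lm=\Pii_1\otimes_{\Lme}\Lm=\HH_0(\Lm,\Pii_1)=\Pii_1/[\Lm,\Pii_1].
\]
Alternatively, \eqref{eq:taudef} gives directly $\D\tau_d\Lm\otimes_{\Lme}\Lm$, and by (1) this is $\Pii_1\otimes_{\Lme}\Lm$. It remains to identify $\Pii_1/[\Lm,\Pii_1]$ with $(\Pii/[\Pii,\Pii])_1$. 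The tensor grading makes $[\Pii,\Pii]$ a graded subspace, and its degree-one part is spanned by the commutators $[a,b]$ with $a,b$ homogeneous and $\deg a+\deg b=1$. In a tensor algebra these are exactly the commutators of $\Pii_0=\Lm$ with $\Pii_1$, since $[x,\lambda]=-[\lambda,x]$. I expect this bookkeeping to be routine.

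For (3), I will again use Proposition~\ref{prop:topcollapse} and Proposition~\ref{prop:toprep}:
\[
\HH^\tau_d(\Lm)\cong\HH_d(\Lm)\cong\Hom_{\Lme}(\Theta^d,\Lm)=\Hom_{\Lme}(\Pii_1,\Lm).
\]
The first isomorphism also follows from \eqref{eq:taudef} together with $\D\tau_d\Lm\cong\Pii_1$. Vanishing then follows from either \cite[Proposition~4.1]{CLMS2} ($\HH^\tau_d(\Lm)=0$) or the Han--Keller theorem $\HH_d(\Lm)=0$ \cite{Kel1,Han,Len}. The chain of natural isomorphisms shows that these two vanishing statements, and the vanishing of $\Hom_{\Lme}(\Pii_1,\Lm)$, are equivalent.

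For the hereditary case ($d=1$, $\Lm=\kk Q$), I will combine (1) with the Baer--Geigle--Lenzing--Ringel identification $\Pii_2(\kk Q)\cong\Pi(Q)$ as graded algebras, where the grading on $\Pi(Q)$ counts starred arrows. This gives $\tau_{\Lme}\Lm=\tau_1\Lm\cong\D\Pi(Q)_1$, and (2) gives $\HH^1_\tau(\kk Q)\cong\HH_0(\Pi(Q))_1$.

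The only genuine obstacle is making sure that the grading identification $\Pii_2(\kk Q)\cong\Pi(Q)$ respects the tensor grading, so that degree one matches the span of the starred arrows. I would take this from \cite{BGL,Rin}, or check it directly: $\Ext^1_{\Lme}(\Lm,\Lme)$ is the cokernel of $\Hom(P_0,\Lme)\to\Hom(P_1,\Lme)$, which is generated by the duals of the arrows, modulo the image that produces the preprojective relation.
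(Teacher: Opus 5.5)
Your proposal is correct and follows the paper's proof essentially verbatim. Part (1) is the degree-$d$ case of \eqref{eq:ses} with $\B_d=0$, combined with Theorem~\ref{thm:bridge}(1). Part (3) is \eqref{eq:taudef} together with Propositions~\ref{prop:topcollapse} and~\ref{prop:toprep} and Han--Keller vanishing. The only cosmetic difference is in (2): your primary route goes through the ordinary group $\HH^d(\Lm)\cong\Theta^d\otimes_{\Lme}\Lm$ via the two propositions. The paper instead reads $\D\tau_d\Lm\otimes_{\Lme}\Lm=\Pii_1\otimes_{\Lme}\Lm$ directly off \eqref{eq:taudef} and part (1), which is exactly the alternative you also state.
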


\begin{proof}
(1) is the case $n=d$ of \eqref{eq:ses}, where $\B_d=0$ because $P_{d+1}=0$, combined with Theorem~\ref{thm:bridge}(1): $\tau_d\Lm\cong\Tor_d^\Lm(\D\Lm,\D\Lm)\cong\D\Theta^d=\D(\Pii_1)$.

(2) By \eqref{eq:taudef}, $\HH^d_\tau(\Lm)=\D\tau_d\Lm\otimes_{\Lm\text{-}\Lm}\Lm=\Pii_1\otimes_{\Lme}\Lm=\HH_0(\Lm,\Pii_1)=\Pii_1/[\Lm,\Pii_1]$. Since the grading of $\Pii$ is by tensor degree and $\Pii_0=\Lm$, the degree-one part of the commutator space is $[\Pii,\Pii]_1=[\Pii_0,\Pii_1]=[\Lm,\Pii_1]$, whence $\Pii_1/[\Lm,\Pii_1]=(\Pii/[\Pii,\Pii])_1$.

(3) By \eqref{eq:taudef}, $\HH^\tau_d(\Lm)=\Hom_{\Lm\text{-}\Lm}(\D\tau_d\Lm,\Lm)=\Hom_{\Lme}(\Pii_1,\Lm)$, which equals $\HH_d(\Lm)$ by Propositions~\ref{prop:topcollapse}--\ref{prop:toprep} and vanishes by \cite{Kel1,Han,Len} (cf.\ \cite[Proposition~4.1]{CLMS2}).
\end{proof}

\begin{corollary}\label{cor:hereditary}
Let $\Lm=\kk Q$ with $Q$ connected, acyclic, and with at least one arrow, and let $\Pi(Q)$ be the preprojective algebra. Then
\begin{gather*}
\tau_{\Lme}\Lm\;\cong\;\D\bigl(\Pi(Q)_1\bigr),
\qquad
\HH^1_\tau(\kk Q)\;\cong\;\HH_0\bigl(\Pi(Q)\bigr)_1,\\
\HH^\tau_1(\kk Q)\;=\;\Hom_{\Lme}\bigl(\Pi(Q)_1,\kk Q\bigr)\;=\;0 .
\end{gather*}
The last vanishing also follows directly: $\Pi(Q)_1$ is generated as a bimodule by the reversed arrows $a^*\in e_{s(a)}\Pi(Q)_1e_{t(a)}$, and $e_{s(a)}(\kk Q)e_{t(a)}=0$ since $Q$ is acyclic; this recovers the degree-one case of \cite[Theorem~4.17]{CLMS2}.
\end{corollary}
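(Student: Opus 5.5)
The plan is to specialize Theorem~\ref{thm:preproj} to $d=1$ and then identify $\Pii_2(\kk Q)$ with the classical preprojective algebra. Since $Q$ is acyclic, $\kk Q$ is finite-dimensional and $\E=\kk^{Q_0}$ is separable. Because $Q$ has at least one arrow, $\kk Q$ is not semisimple, so $\gldim\kk Q=1$ and $d=1$. Moreover $\tau_1=\tau\Omega^0=\tau$, so $\tau_1\Lm$ is exactly the Auslander--Reiten translate $\tau_{\Lme}\Lm$ computed in $\modcat\Lme$.

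Theorem~\ref{thm:preproj}(1) then gives $\tau_{\Lme}\Lm\cong\D(\Pii_2(\kk Q)_1)$. Parts (2) and (3) give
\[
\HH^1_\tau(\kk Q)\cong(\Pii_2/[\Pii_2,\Pii_2])_1
\qquad\text{and}\qquad
\HH^\tau_1(\kk Q)=\Hom_{\Lme}(\Pii_2(\kk Q)_1,\kk Q)=0 .
\]

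It remains to replace $\Pii_2(\kk Q)$ by $\Pi(Q)$. This should be done compatibly with the gradings: tensor degree on $\Pii_2(\kk Q)$, and number of starred arrows on $\Pi(Q)$. The cited isomorphism of Baer--Geigle--Lenzing and Ringel is graded in exactly this sense. This is the one step needing care, and I expect it to be the main obstacle. To make it concrete, I would compute $\Theta^1=\Ext^1_{\Lme}(\Lm,\Lme)$ directly from Happel's resolution
\[
0\to\Lm\otimes_\E\kk Q_1\otimes_\E\Lm\to\Lm\otimes_\E\Lm\to\Lm\to0 .
\]
Applying $\Hom_{\Lme}(-,\Lme)$ exhibits $\Theta^1$ as the cokernel of a map
\[
\Lm\otimes_\E\Lm\to\Lm\otimes_\E(\kk Q_1)^{*}\otimes_\E\Lm .
\]
Here the reversed arrows $a^*$ form the dual basis of $(\kk Q_1)^*$, and the image of $e_x\otimes e_x$ is precisely the $e_x$-component of the mesh relation $\sum_a[a,a^*]$. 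Hence $\Theta^1\cong\Pi(Q)_1$ as bimodules, and the tensor algebra generated over $\Lm$ by $\Theta^1$ is $\Pi(Q)$ with the starred-arrow grading.

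With the grading matched, $(\Pi/[\Pi,\Pi])_1=\HH_0(\Pi(Q))_1$, because the zeroth Hochschild homology of a graded algebra is its commutator quotient, graded degreewise. This gives the second displayed isomorphism.

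For the direct verification of the vanishing, I would use the computation above: $\Pi(Q)_1$ is generated as a bimodule by the elements $a^*\in e_{s(a)}\Pi(Q)_1e_{t(a)}$. A bimodule map $f\colon\Pi(Q)_1\to\kk Q$ is therefore determined by the values
\[
f(a^*)=e_{s(a)}f(a^*)e_{t(a)}\in e_{s(a)}(\kk Q)e_{t(a)} .
\]
This space is spanned by paths from $t(a)$ to $s(a)$. Such a path, composed with $a$, would give an oriented cycle, so the space is zero by acyclicity. Hence $f=0$, recovering the degree-one case of \cite[Theorem~4.17]{CLMS2}.
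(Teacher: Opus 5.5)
Your proposal is correct and follows the paper's route. The corollary is Theorem~\ref{thm:preproj} at $d=1$ (with $\tau_1=\tau$), combined with the graded Baer--Geigle--Lenzing--Ringel identification $\Pii_2(\kk Q)\cong\Pi(Q)$, and your direct vanishing argument via the generators $a^*$ is exactly the one given in the statement.

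Your explicit computation of $\Theta^1$ as the cokernel of $\Hom_{\Lme}(d_1,\Lme)$ is a self-contained replacement for that citation. The step you flag as the main obstacle is in fact automatic: for a sub-bimodule $N\subseteq M$ one has $T_\Lm(M)/(N)\cong T_\Lm(M/N)$ as graded algebras, so $\Theta^1\cong\Pi(Q)_1$ already yields the full graded isomorphism.
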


So the Auslander--Reiten translate of a hereditary algebra, taken in the category of bimodules, is the dual of the linear span of all preprojective elements of degree one, and the first $\tau$-Hochschild cohomology is the first graded piece of the zeroth Hochschild homology of the preprojective algebra, which is the space of degree-one necklaces.

\begin{example}\label{ex:kron}
Let $\Lm=\kk K_2$ be the path algebra of $1\rightrightarrows 2$ with arrows $a,b$, so $C=\left(\begin{smallmatrix}1&2\\0&1\end{smallmatrix}\right)$ in the convention \eqref{eq:cartan}. The resolution \eqref{eq:happel} is $0\to P_1\to P_0\to\Lm\to0$ and one computes (we did so both by hand and by machine)
\begin{gather*}
\dim\Hom_{\Lme}(P_0,\Lme)=\textstyle\sum_x\dim e_x\Lm\cdot\dim\Lm e_x=6,\\
\dim\Hom_{\Lme}(P_1,\Lme)=2\cdot\dim e_2\Lm\cdot\dim\Lm e_1=18,
\end{gather*}
and $\Hom_{\Lme}(\Lm,\Lme)=0$ (no nonzero bimodule map $\Lm\to\Lm\otimes\Lm$ exists; equivalently $\Hom_\Lm(\D\Lm,\Lm)=0$ as there are no maps from preinjectives to preprojectives). Hence $\dim\Ext^1_{\Lme}(\Lm,\Lme)=18-6=12$, and the four-term sequence \eqref{eq:fourterm} with $\nu\Lm=\D\Hom_{\Lme}(\Lm,\Lme)=0$ gives $\dim\tau_{\Lme}\Lm=12$ directly. On the preprojective side, $\Pi(K_2)_1$ is spanned by: $a^*,b^*$ (length one); $a^*a,\,a^*b,\,b^*a$ in $e_1\Pi e_1$ and $aa^*,\,ab^*,\,ba^*$ in $e_2\Pi e_2$ (after the relations $a^*a+b^*b=0$ and $aa^*+bb^*=0$); and a four-dimensional space of length-three elements from $1$ to $2$, the span of $\{vs^*u\}_{u,v\in\{a,b\},\,s\in\{a,b\}}$ modulo four relations. Total: $2+3+3+4=12=\dim\tau_{\Lme}\Lm$, as predicted by Corollary~\ref{cor:hereditary}. Passing to $\HH_0$ kills the off-diagonal components and identifies $aa^*\equiv a^*a$, $ab^*\equiv b^*a$, $ba^*\equiv a^*b$, leaving
\[
\HH^1_\tau(\kk K_2)\;\cong\;\bigl(\Pi(K_2)/[\Pi,\Pi]\bigr)_1\;=\;\kk\{[aa^*],[ab^*],[ba^*]\},\qquad \dim=3,
\]
in agreement with \cite[Corollary~4.10]{CLMS2}:
\[
\dim\HH^1_\tau=\dim Z(\Lm)-\sum_x\dim e_x\Lm e_x+\sum_{\alpha\in Q_1}\dim e_{t(\alpha)}\Lm e_{s(\alpha)}=1-2+4=3,
\] 
which equals $\dim\HH^1(\kk K_2)$ as forced by Proposition~\ref{prop:topcollapse}. Dually $\HH^\tau_1(\kk K_2)=\Hom_{\Lme}(\Pi_1,\Lm)=0$, since the generators $a^*,b^*$ must land in $e_1\Lm e_2=0$. Finally $\tr(C^{-1}C^T)=-2=1-3=\chi(\HH^\bullet)$, a preview of Theorem~\ref{thm:trace}.
\end{example}

\begin{example}\label{ex:An}
For $\Lm=\kk A_n$ with linear orientation and arrows $a_i\colon i\to i+1$, the preprojective relations read $a_1^*a_1=0$, $a_i^*a_i=a_{i-1}a_{i-1}^*$ $(2\leq i\leq n-1)$, $a_{n-1}a_{n-1}^*=0$, and in $\HH_0$ the commutator identifications $a_ia_i^*\equiv a_i^*a_i$ propagate these to give $[a_ia_i^*]\equiv0$ for all $i$; all longer degree-one diagonal elements vanish for parity reasons. Hence $\HH^1_\tau(\kk A_n)=(\Pi(A_n)/[\Pi,\Pi])_1=0$, matching $1-n+(n-1)=0$ from \cite[Corollary~4.10]{CLMS2}. Note that the translate itself does not vanish: e.g.\ $\dim\tau_{\Lme}(\kk A_3)=\dim\Pi(A_3)_1=3$, with basis dual to $\{a_1^*,\,a_2^*,\,a_1a_1^*=a_2^*a_2\}$, while all its (co)invariant spaces against $\Lm$ are zero.
\end{example}

\begin{remark}\label{rem:dRF}
For $d$-representation-finite $\Lm$ the algebra $\Pii_{d+1}(\Lm)$ is finite-dimensional selfinjective \cite{IO}, so Theorem~\ref{thm:preproj}(2) computes $\HH^d_\tau(\Lm)$ as the degree-one necklace space of a selfinjective graded algebra; for $d$-representation-infinite $\Lm$ it is the degree-one part of the bimodule Calabi--Yau completion \cite{Kel3,HIO}. Question~9.4 of \cite{A5} asks for the action of the higher Auslander--Reiten bimodule $\D\Lm[-d]$ on the calculus of $d$-hereditary algebras; Section~8.4 of \cite{A5} interprets $\sigma_\Lm$ as the first Fourier mode of the $\Theta$-adic grading underlying Keller's completion. Theorem~\ref{thm:preproj} shows that the top $\tau$-translate of \cite{CLMS2} \emph{is} the first Fourier mode, realized at the abelian level; the $\tau$-Hochschild groups of $d$-hereditary algebras therefore provide computable test data for that question, alongside the entropy computations of \cite{Han3}.
\end{remark}

\section{The Coxeter trace formula as an Euler characteristic}\label{sec:trace}

In this section $\Lm$ is elementary with $\gldim\Lm=d<\infty$; then $\det C=\pm1$ \cite{Hap3}, so $C^{-1}$ is an integer matrix.

\begin{lemma}\label{lem:euler}
For all vertices $x,y$ one has 
\[
\displaystyle\sum_{i\geq0}(-1)^i\dim_\kk\Ext^i_\Lm(S_x,S_y)=(C^{-1})_{xy}.
\]
\end{lemma}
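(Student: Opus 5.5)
The plan is to use the Euler form of $\modcat\Lm$ and compare two ways of evaluating it on projectives against simples. Since $\gldim\Lm=d<\infty$, every finitely generated module has a finite projective resolution and $\Ext^i_\Lm(M,N)=0$ for $i>d$. So the pairing
\[
\langle M,N\rangle:=\sum_{i\geq0}(-1)^i\dim_\kk\Ext^i_\Lm(M,N)
\]
is a finite sum. It is additive on short exact sequences in each variable, by the long exact $\Ext$-sequences and the vanishing of the alternating sum of dimensions of a finite exact sequence of finite-dimensional spaces. Hence $\langle-,-\rangle$ descends to a bilinear form on the Grothendieck group $K_0(\modcat\Lm)$. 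That group is free on the classes $[S_x]$, $x\in Q_0$, by Jordan--H\"older. Let $E$ be the matrix with entries $E_{yz}=\langle S_y,S_z\rangle$; the claim is $E=C^{-1}$.

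The first key step is to compute the pairing of an indecomposable projective with a simple. For $P_x=\Lm e_x$, all higher $\Ext$ vanish, so
\[
\langle P_x,S_z\rangle=\dim_\kk\Hom_\Lm(\Lm e_x,S_z)=\dim_\kk e_xS_z=\delta_{xz},
\]
where we use that $\Lm$ is elementary, so $e_xS_z=\kk$ if $x=z$ and $0$ otherwise. The second step is to expand $[P_x]$ in $K_0$. By the convention \eqref{eq:cartan}, $c_{xy}=\dim e_y\Lm e_x$ is the multiplicity of $S_y$ as a composition factor of $\Lm e_x$, so $[P_x]=\sum_y c_{xy}[S_y]$. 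Bilinearity then gives
\[
\delta_{xz}=\sum_y c_{xy}\langle S_y,S_z\rangle=(CE)_{xz},
\]
that is, $CE=I$. Since $C$ is invertible (indeed $\det C=\pm1$ as recalled above), this yields $E=C^{-1}$, which is the statement.

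I expect the only real point of care to be the index conventions. The identity $\langle P_x,S_z\rangle=\delta_{xz}$ must be paired with the correct row/column reading of $c_{xy}$, so that one lands on $CE=I$ rather than $C^TE=I$ or $EC=I$. With $c_{xy}$ defined as the multiplicity of $S_y$ in $\Lm e_x$, the bookkeeping above is consistent, and the order in the product is forced. As a check, one could instead pair simples against injectives $I_z=\D(e_z\Lm)$, using $\langle S_y,I_z\rangle=\delta_{yz}$. This gives $EC'=I$ for the matrix $C'$ expressing the composition factors of the injectives. Consistency with the first computation then amounts to the standard identity $\dim e_y\Lm e_x=\dim\Hom_\Lm(\Lm e_y,\Lm e_x)$, which serves as a sanity check of the convention.
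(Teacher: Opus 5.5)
Your proof is correct and is essentially the paper's argument: both combine $\Hom_\Lm(\Lm e_z,S_y)=\delta_{zy}\kk$ with the Cartan expansion $[\Lm e_z]=\sum_y c_{zy}[S_y]$ in $K_0(\modcat\Lm)$. The paper expands $[S_x]$ in the projective basis through an explicit finite resolution, obtaining $\sum_z\mu_z c_{zy}=\delta_{xy}$, whereas you use bilinearity of the Euler form to get $CE=I$ directly. Incidentally, $CE=I$ for square matrices already forces $E=C^{-1}$, so you do not need to appeal to $\det C=\pm1$.
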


\begin{proof}
In $K_0(\modcat\Lm)$ with basis the simples, $[\Lm e_z]=\sum_y c_{zy}[S_y]$ by \eqref{eq:cartan}. Let $Q_\bullet\to S_x$ be a finite projective resolution with $Q_i=\bigoplus_z(\Lm e_z)^{m_{iz}}$ and set $\mu_z=\sum_i(-1)^im_{iz}$. Then $[S_x]=\sum_z\mu_z[\Lm e_z]$ gives $\delta_{xy}=\sum_z\mu_zc_{zy}$ for all $y$, i.e.\ $\mu_z=(C^{-1})_{xz}$. Since $\Hom_\Lm(\Lm e_z,S_y)=e_zS_y=\delta_{zy}\kk$, computing $\Ext$ from $Q_\bullet$ yields $\sum_i(-1)^i\dim\Ext^i(S_x,S_y)=\mu_y=(C^{-1})_{xy}$.
\end{proof}

\begin{theorem}\label{thm:trace}
Let $\Lm$ be elementary with $\gldim\Lm<\infty$ and $n$ simple modules. Then
\begin{align}
\sum_{i\geq0}(-1)^i\dim_\kk\HH^i(\Lm)\;&=\;\tr\bigl(C^{-1}C^{T}\bigr)\;=\;-\tr\bigl(\sigma_\Lm|_{\HH_\bullet(\Lm)}\bigr),\label{eq:traceA}\\
\sum_{i\geq0}(-1)^i\dim_\kk\HH_i(\Lm)\;&=\;\tr\bigl(C^{-1}C\bigr)\;=\;n.\label{eq:traceB}
\end{align}
\end{theorem}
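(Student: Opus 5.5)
Both identities will be derived by taking Euler characteristics of the Hochschild complexes \eqref{eq:cochains} built on Happel's resolution. These complexes are finite, of length $d$, since $P_{d+1}=0$. The Euler characteristic of a bounded complex of finite-dimensional spaces equals that of its cohomology. This gives
\[
\textstyle\sum_i(-1)^i\dim\HH^i(\Lm)=\sum_i(-1)^i\dim\Hom_{\E^{\mathrm e}}(T_i,\Lm),\qquad \sum_i(-1)^i\dim\HH_i(\Lm)=\sum_i(-1)^i\dim \Lm\otimes_{\E^{\mathrm e}}T_i .
\]
Equivalently, one may sum the Cibils--Lanzilotta--Marcos--Solotar dimension formulas \cite[Theorem~4.5]{CLMS2} with alternating signs. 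In that version the $\tau$-correction terms $\dim\im\delta_n$ telescope, which is the ``Euler characteristic of the $\tau$-correction'' interpretation. I would phrase the proof through the chain-level count and remark on the telescoping afterwards.

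Next I would compute the chain dimensions. Write $t_i(x,y):=\dim e_xT_ie_y=\dim\Tor_i^\Lm(\kk_x,{}_y\kk)$. Since $\E^{\mathrm e}\cong\kk^{n}\otimes\kk^n$ is split semisimple, $\Hom_{\E^{\mathrm e}}(T_i,\Lm)\cong\bigoplus_{x,y}\Hom_\kk(e_xT_ie_y,e_x\Lm e_y)$ and $\Lm\otimes_{\E^{\mathrm e}}T_i\cong\bigoplus_{x,y}e_y\Lm e_x\otimes e_xT_ie_y$. By \eqref{eq:cartan}, $\dim e_x\Lm e_y=c_{yx}$ and $\dim e_y\Lm e_x=c_{xy}$. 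Hence the two Euler characteristics are
\[
\textstyle\sum_{x,y}\chi_{xy}\,c_{yx}\quad\text{and}\quad\sum_{x,y}\chi_{xy}\,c_{xy},\qquad \chi_{xy}:=\sum_i(-1)^it_i(x,y).
\]

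The crux is identifying $\chi_{xy}$ with an entry of $C^{-1}$, with the indices in the right order. Since $\E$ is split, $\dim\Tor_i^\Lm(\kk_x,{}_y\kk)=\dim\Ext^i_\Lm({}_y\kk,{}_x\kk)$ by the duality $\D\Tor_i(M,N)\cong\Ext^i(N,\D M)$, using $\D\kk_x={}_x\kk$. Lemma~\ref{lem:euler} then gives $\chi_{xy}=(C^{-1})_{yx}$. Substituting:
\begin{itemize}
\item the homology sum is $\sum_{x,y}(C^{-1})_{yx}c_{xy}=\tr(C^{-1}C)=n$, which is \eqref{eq:traceB};
\item the cohomology sum is $\sum_{x,y}(C^{-1})_{yx}c_{yx}=\sum_{x,y}(C^{-1})_{yx}(C^T)_{xy}=\tr(C^{-1}C^T)$, which is the first equality of \eqref{eq:traceA}.
\end{itemize}

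The main obstacle is the index bookkeeping: the conventions for $c_{xy}$, for $e_xT_ne_y$, and for the direction of $\Ext$ must mesh so that a transpose lands exactly once in the cohomology case and not at all in the homology case. As a fallback check I would test the Kronecker computation of Example~\ref{ex:kron}, where $\tr(C^{-1}C^T)=-2$. Fortunately $\tr(C^{-1}C^T)=\tr((C^{-1}C^T)^T)=\tr(C C^{-T})=\tr(C^{-T}C)$, so a misplaced transpose in the cohomology case does not change the answer. The homology case is insensitive to simultaneous transposition as well.

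Finally, the second equality of \eqref{eq:traceA} is immediate from Section~\ref{subsec:coxeter}. By Han--Keller vanishing, $\HH_\bullet(\Lm)=\HH_0(\Lm)$ has basis $\{\bar e_x\}$, and $\sigma_\Lm$ acts there by the Coxeter matrix $-C^{-1}C^T$ \cite[Theorem~5.5]{A5}, so $-\tr\sigma_\Lm=\tr(C^{-1}C^T)$.
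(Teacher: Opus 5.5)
Your proof is correct and follows essentially the paper's own argument: Euler characteristics of the bounded complexes \eqref{eq:cochains} on Happel's resolution, the decomposition over $e_xT_ie_y\cong\Tor_i^\Lm(\kk_x,{}_y\kk)$, duality to $\Ext^i_\Lm(S_y,S_x)$ and Lemma~\ref{lem:euler} to get $(C^{-1})_{yx}$, and then \cite[Theorem~5.5]{A5} for the Coxeter trace. One caution: your ``fortunately'' remark overstates the robustness, since transposing only one factor (for instance writing $\chi_{xy}=(C^{-1})_{xy}$) swaps the two answers $\tr(C^{-1}C^{T})$ and $n$; the relative index order, which you did track correctly, is exactly the content of the computation, and the Kronecker check is a genuine test of it.
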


\begin{proof}
Since $\pdim_{\Lme}\Lm=d<\infty$, the complexes \eqref{eq:cochains} with $X=\Lm$ are bounded, so their Euler characteristics equal those of their (co)homology. For the cochain complex, $\Hom_{\Lme}(P_i,\Lm)\cong\Hom_{\E^{\mathrm e}}(T_i,\Lm)=\bigoplus_{x,y}\Hom_\kk(e_xT_ie_y,\,e_x\Lm e_y)$ with $e_xT_ie_y\cong\Tor_i^\Lm(\kk_x,{}_y\kk)$ (Section~\ref{subsec:happel}), so
\[
\dim\Hom_{\Lme}(P_i,\Lm)=\sum_{x,y}\dim\Tor_i^\Lm(\kk_x,{}_y\kk)\cdot c_{yx}.
\]
By the duality $\D\Tor_i^\Lm(\kk_x,{}_y\kk)\cong\Ext^i_\Lm({}_y\kk,\D\kk_x)=\Ext^i_\Lm(S_y,S_x)$ and Lemma~\ref{lem:euler}, $\sum_i(-1)^i\dim\Tor_i^\Lm(\kk_x,{}_y\kk)=(C^{-1})_{yx}$, whence
\[
\chi\bigl(\HH^\bullet(\Lm)\bigr)=\sum_{x,y}c_{yx}\,(C^{-1})_{yx}=\tr\bigl(C\,C^{-T}\bigr)=\tr\bigl(C^{-1}C^{T}\bigr),
\]
the last step by invariance of the trace under transposition. By \cite[Theorem~5.5]{A5} the matrix of $\sigma_\Lm$ on $\HH_0(\Lm)=\HH_\bullet(\Lm)$ in the basis $\{\bar e_x\}$ is $-C^{-1}C^T$, which gives \eqref{eq:traceA}. For the chain complex, $\Lm\otimes_{\Lme}P_i\cong\Lm\otimes_{\E^{\mathrm e}}T_i$ has dimension $\sum_{x,y}c_{xy}\dim\Tor_i^\Lm(\kk_x,{}_y\kk)$, so
\[
\chi\bigl(\HH_\bullet(\Lm)\bigr)=\sum_{x,y}c_{xy}(C^{-1})_{yx}=\tr\bigl(C^{-1}C\bigr)=n. \qedhere
\]
\end{proof}

\begin{remark}\label{rem:triangulate}
Identity \eqref{eq:traceA} is Happel's trace formula \cite{Hap2}, whose original proof is itself an Euler-characteristic computation on the minimal resolution; identity \eqref{eq:traceB} is the Euler shadow of the concentration $\HH_\bullet(\Lm)=\HH_0(\Lm)\cong\kk^n$ \cite{Kel1,Han,Len}, cf.\ \cite[Proposition~5.1]{A5}. Alternatively, both identities follow by specializing the dimension formulas of \cite[Theorem~4.5]{CLMS2} at $n=d+1$, where the $\tau$-groups vanish by \cite[Proposition~4.1]{CLMS2}. The three results triangulate one Euler identity: \cite[Theorem~4.5]{CLMS2} refines it \emph{degreewise}  (each partial alternating sum is the dimension of a $\tau$-group) while \cite[Corollary~5.9]{A5} refines it at the \emph{operator} level, the number $-\tr(C^{-1}C^T)$ is the trace of a canonical automorphism of the calculus whose characteristic polynomial is the Coxeter polynomial, categorified in degree zero by Han's Lefschetz formula \cite{Han2} via the Shklyarov pairing \cite{Shk}. The $\tau$-correction terms and the Coxeter automorphism are thus two orthogonal refinements of the same numerical core.
\end{remark}

\section{Complementarity of the two refinements}\label{sec:complementarity}

We now make precise the sense in which the $\tau$-Hochschild theory of \cite{CLMS2} and the Coxeter automorphism of \cite{A5} are transversal refinements of Hochschild theory: each separates algebras that the other cannot.

\begin{example}[$\sigma$ sharp where $\tau$ is blind]\label{ex:D4A4}
Let $A$ and $B$ be the path algebras of any orientations of the Dynkin diagrams $D_4$ and $A_4$. Then all $\tau$-Hochschild groups with regular coefficients vanish for both algebras:
\[
\HH^n_\tau(A)=\HH^n_\tau(B)=0\quad(n\geq1),\qquad \HH^\tau_n(A)=\HH^\tau_n(B)=0\quad(n\geq1).
\]
In particular the $\tau$-theory does not distinguish $A$ from $B$. The Coxeter automorphism does: the characteristic polynomial of $\sigma$ on $\HH_0$ is $x^4+x^3+x+1$ for $A$ and $x^4+x^3+x^2+x+1$ for $B$ \cite[Section~5]{A5}. For the orientations considered in \cite[Section~5]{A5} the Tamarkin--Tsygan calculi of $A$ and $B$ are moreover isomorphic, so the separation is achieved by $\sigma$ alone.

Both quivers are trees, so $Z(\Lm)=\kk$, $\sum_x\dim e_x\Lm e_x=4$ (the number of vertices), and for each arrow $\alpha$ the space $e_{t(\alpha)}\Lm e_{s(\alpha)}$ is one-dimensional, spanned by $\alpha$ itself, so the arrow sum is $3$ in both cases. By \cite[Corollary~4.10]{CLMS2}, $\dim\HH^1_\tau=1-4+3=0$ for both. For $n\geq2=\gldim+1$ all groups vanish by \cite[Proposition~4.1]{CLMS2}, and $\HH^\tau_1=0$ by \cite[Theorem~4.17]{CLMS2} (or by Corollary~\ref{cor:hereditary}). The statements about $\sigma$ are \cite[Theorem~5.5 and Corollary~5.7]{A5}, the Coxeter polynomials of $D_4$ and $A_4$ being as displayed.
\end{example}

For the converse direction we use the derived-equivalent pair of \cite[Example~4.12]{CLMS2}, taken from Xi's survey \cite[Section~6]{Xi}: $\Lm=\kk Q/\langle ada,\,dc,\,ad-cb\rangle$ and $\Lm'=\kk Q'/\langle acb a,\,cbac\rangle$, where $Q'$ is the oriented $3$-cycle $x\xrightarrow{a}y\xrightarrow{b}z\xrightarrow{c}x$ (composition right to left). Since $\sigma$ is only defined under a finiteness hypothesis, we first record:

\begin{example}\label{ex:xigldim}
$\gldim\Lm'=\infty$, and hence also $\gldim\Lm=\infty$. $\Lm'$ is the cyclic Nakayama algebra whose indecomposable projectives are uniserial of dimensions $(\dim P_x,\dim P_y,\dim P_z)=(4,5,4)$: from $x$ the nonzero paths are $a,ba,cba$; from $z$ they are $c,ac,bac$; from $y$ they are $b,cb,acb,bacb$, every length-five path from $y$ containing the forbidden subpath $cbac$. Write $U(j,\ell)$ for the uniserial module with top $S_j$ and length $\ell$, so $P_x=U(x,4)$, $P_y=U(y,5)$, $P_z=U(z,4)$. Then
\[
\Omega S_x=\rad P_x=U(y,3),\ \ \,
\Omega^2S_x=U(y,2),\ \ \,
\Omega^3S_x=U(x,3),\ \ \,
\Omega^4S_x=S_x,
\]
each step using the projective cover $P_{\mathrm{top}}\twoheadrightarrow U(j,\ell)$ with kernel the uniserial of complementary length. Thus $S_x$ is $\Omega$-periodic and non-projective, so $\pdim S_x=\infty$. Finiteness of global dimension is a derived invariant \cite{Hap3}, so $\gldim\Lm=\infty$ as well.
\end{example}

\begin{example}[$\tau$ sharp where derived invariants are blind]\label{ex:xi}
The algebras $\Lm$ and $\Lm'$ above are derived equivalent, so \emph{every} derived invariant takes equal values on them: in particular their Tamarkin--Tsygan calculi are isomorphic \cite{A1,A2,A3,A4}, the pair $(\HHl,\sigma)$ is defined for neither (Example~\ref{ex:xigldim}), and no extension of the Coxeter automorphism along \cite[Remark~3.2]{A5} --- e.g.\ to Gorenstein algebras, where $\D\Lm$ is still a two-sided tilting complex \cite{Hap4} --- can separate them, being constructed by a derived-invariant recipe from the conjugation-invariant class of $\om[-1]$ \eqref{eq:commute}. Yet by \cite[Example~4.12]{CLMS2},
\begin{gather*}
\dim\HH^1_\tau(\Lm)=1\neq2=\dim\HH^1_\tau(\Lm'),\\
\dim\HH^\tau_1(\Lm)=3\neq5=\dim\HH^\tau_1(\Lm').
\end{gather*}
\end{example}

\begin{definition}\label{def:combined}
The \emph{combined invariant} of a finite-dimensional algebra $\Lm$ (with $\E$ separable) is
\[
\mathcal I(\Lm)\;=\;\Bigl(\,\HHl(\Lm),\ \sigma_\Lm\ \text{(when $\gldim\Lm<\infty$)};\ \ \HHl^\bullet_\tau(\Lm,-),\ \HHl^\tau_\bullet(\Lm,-)\,\Bigr).
\]
\end{definition}

\begin{remark}\label{rem:finer}
$\mathcal I$ is a Morita invariant, strictly finer than each of its two constituents: strictly finer than the derived invariant $(\HHl,\sigma)$ by Example~\ref{ex:xi}, and strictly finer than the $\tau$-Hochschild theory by Example~\ref{ex:D4A4}.
\end{remark}

The complementarity persists, in a subtler form, inside the smooth world. The following example was found, and all its numerology verified, by direct machine computation of the complexes \eqref{eq:cochains} with $X=\Lme$.

\begin{example}[A derived-equivalent pair of finite global dimension]\label{ex:A3pair}
Let $\Lm=\kk A_3$ with linear orientation $1\xrightarrow{a}2\xrightarrow{b}3$, and let $\Lm'=\kk A_3/\langle ba\rangle=\kk A_3/\rad^2$. The algebra $\Lm'$ is gentle with underlying graph $A_3$, hence iterated tilted of type $A_3$ by Assem--Happel \cite{AH}, and therefore derived equivalent to $\Lm$ \cite{Hap3}; one has $\gldim\Lm=1$ and $\gldim\Lm'=2$. Direct computation on \eqref{eq:happel} gives the graded dimensions of $\Theta^\bullet=\Ext^\bullet_{\Lme}(\Lm,\Lme)$:
\[
\bigl(\dim\Theta^0,\dim\Theta^1,\dim\Theta^2\bigr)
=\begin{cases}
(1,\,3,\,0) & \text{for }\Lm=\kk A_3,\\[1mm]
(3,\,0,\,1) & \text{for }\Lm'=\kk A_3/\rad^2 .
\end{cases}
\]
(For $\Lm'$: the cochain spaces $\Hom_{\Lme}(P_i,\Lme)$ have dimensions $8,8,4$ and the differentials have ranks $5$ and $3$; for $\Lm$ they have dimensions $10,12$ and the differential has rank $9$. As a consistency check, the Euler characteristics $-2$ and $4$ agree with the $K$-theoretic evaluation $\chi(\Theta^\bullet)=\sum_{x,y}u_xv_y(C^{-1})_{yx}$, $u_x=\dim e_x\Lm$, $v_y=\dim\Lm e_y$, obtained from Lemma~\ref{lem:euler} by resolving both factors $\D\Lm$ of Theorem~\ref{thm:bridge}(1) into simples.) Consequently, for the translates of Theorem~\ref{thm:bridge}:
\begin{itemize}
\item[--] $\tau_1\Lm\cong\D\Pi(A_3)_1$ has dimension $3$ with $\B_1=0$: \emph{pure derived shadow} (Example~\ref{ex:An});
\item[--] $\tau_1\Lm'$ has dimension $3$ with $\B_1=\tau_1\Lm'$ and $\Tor_1=0$: \emph{pure minimal residue};
\item[--] $\tau_2\Lm'\cong\D\bigl(\Pii_3(\Lm')_1\bigr)$ is one-dimensional; indeed $\Pii_3(\Lm')$ is the radical square zero cyclic Nakayama algebra on three vertices, the new arrow closes the cycle $3\to1$, a selfinjective algebra, in line with \cite{IO}.
\end{itemize}
Thus across this derived equivalence the conjugation class of $\om^{\Ltimes2}$ is preserved (Remark~\ref{rem:derivedpart}) while its homology grading is completely scrambled; the two translates even have the same total dimension with opposite composition. Nevertheless, \emph{all} $\tau$-Hochschild groups with regular coefficients agree, and vanish, for both algebras: $\HH^1_\tau(\Lm')=1-3+2=0$ and $\HH^\tau_1(\Lm')=3-3+0=0$ by \cite[Corollary~4.10]{CLMS2} (here $\dim\HH_0(\Lm')=3$), the degree-$2$ groups of $\Lm'$ equal $\HH^2(\Lm')$ and $\HH_2(\Lm')$ by Proposition~\ref{prop:topcollapse} and vanish by derived invariance from $\kk A_3$, and the hereditary side vanishes by Example~\ref{ex:D4A4}'s computation. (A further machine check: $\HH^\bullet(\Lm')=(1,0,0)=\HH^\bullet(\kk A_3)$, as derived invariance demands.)
\end{example}

Example~\ref{ex:A3pair} shows that the failure of derived invariance of $\tau$-Hochschild theory is already present at the bimodule level over the smooth locus, but there it happened to be invisible to the groups. Together with the singular counterexample of Example~\ref{ex:xi}, this motivates:

\begin{question}\label{q:derinv}
Are the $\tau$-Hochschild (co)homology groups $\HH^n_\tau(\Lm)$, $\HH^\tau_n(\Lm)$ derived invariants of $\Lm$ within the class of algebras of \emph{finite global dimension}?
\end{question}

Some reductions are immediate. 
\begin{itemize}
    \item In degrees $n>\max(\gldim\Lm,\gldim\Gamma)$ both sides vanish by \cite[Proposition~4.1]{CLMS2}; and in the top degree of the algebra of larger global dimension, the collapse of Proposition~\ref{prop:topcollapse}, the derived invariance of $\HH^n$ and $\HH_n$, and their vanishing above the smaller global dimension show that the two sides agree there as well. The question is therefore concentrated in degrees $1\leq n\leq\max(\gldim\Lm,\gldim\Gamma)-1$.
    \item In particular the answer is positive whenever both algebras have global dimension at most one.
    \item The first open case is $\max=2$ in degree $1$: since $\HH^1$ and $\HH_1$ are derived invariant in any case, Question~\ref{q:derinv} reduces there to the derived invariance of the \emph{excess} $\dim\HH^1_\tau(\Lm)-\dim\HH^1(\Lm)$ of \cite{CLMS1} and of its homological companion $\dim\HH^\tau_1(\Lm)-\dim\HH_1(\Lm)$, over derived equivalent algebras of global dimension at most two. Example~\ref{ex:A3pair} verifies the smallest instance.
\end{itemize}

\section{Further remarks}\label{sec:questions}

\subsection{The extension class}
The sequences \eqref{eq:ses} define classes
\[
\varepsilon_n\;\in\;\Ext^1_{\Lme}\bigl(\Tor_n^\Lm(\D\Lm,\D\Lm),\,\B_n\bigr)
\]
measuring how the derived shadow is glued to the minimal residue inside $\tau_n\Lm$.

Is there an intrinsic description of $\varepsilon_n$, for instance in terms of the $A_\infty$-structure on $\Ext^\bullet_{\Lme}(\E^{\mathrm e}\text{-data})$ governing the minimal model \eqref{eq:happel}, or of Postnikov data of the object $\om^{\Ltimes2}$? Note that $\varepsilon_n$ is \emph{not} a derived invariant in any naive sense: in Example~\ref{ex:A3pair} the source and target of $\varepsilon_1$ are interchanged between the two algebras.

\subsection{Operations}
The Tamarkin--Tsygan calculus consists of operations on (co)homology; the $\tau$-theory keeps cochains modulo coboundaries. The two regimes overlap as follows. If $\zeta\in\Hom_{\Lme}(P_q,\Lm)$ is an honest \emph{cocycle}, then for any cochain $f$ and any cochain-level cup product $\smile$ built from a diagonal $P_\bullet\to P_\bullet\otimes_\Lm P_\bullet$ one has $(f+\delta u)\smile\zeta=f\smile\zeta+\delta(u\smile\zeta)\pm u\smile\delta\zeta=f\smile\zeta+\delta(u\smile\zeta)$, so cupping with cocycles descends to pairings
\[
\coker\delta_p\,\otimes\,\Z^q(\Lm,\Lm)\,\longrightarrow\,\coker\delta_{p+q},
\quad\text{i.e.}\quad
\HHl^p_\tau(\Lm,\Lm)\otimes \Z^q \longrightarrow \HHl^{p+q}_\tau(\Lm,\Lm).
\]
However the action of a coboundary $\delta v$ on the class of $f$ is $f\mapsto\pm(\delta f)\smile v$ modulo $\im\delta$, which has no reason to vanish since $f$ need not be a cocycle, and the construction a priori depends on the chosen diagonal.

Does $\bigoplus_n\HH^n_\tau(\Lm)$ carry a canonical graded module structure over $\HH^\bullet(\Lm)$, or equivalently, does the pairing above factor through cohomology classes independently of the diagonal? Dually, is there a cap-type action of $\HH^\bullet(\Lm)$ on $\bigoplus_n\HH^\tau_n(\Lm)$ compatible, through the sequences \eqref{eq:ses}, with the derived-invariant cap-product calculus on $\Tor_\bullet^\Lm(\D\Lm,\D\Lm)$ in the sense of \cite{A4}? An affirmative answer would make the $\tau$-theory a module over the calculus, with the Coxeter automorphism acting on the shadow part.

\subsection{The missing cyclic symmetry}
For radical square zero algebras, comparing the computation of $\HH^\tau_\bullet$ in \cite[Section~7]{CLMS2} with Cibils' computation of $\HH_\bullet$ and of cyclic homology \cite{C2,C3} shows that, in each degree, passing from Hochschild homology to $\tau$-homology replaces a space of (anti)invariants of the cyclic rotation acting on cycles of the appropriate length by the \emph{full} space of such cycles: the cyclic symmetrization is exactly what the $\tau$-theory forgets. Since Connes' operator $B$ is assembled from the cyclic symmetrizer, it does not obviously descend to $\HH^\tau_\bullet$.

Is there a Connes-type operator on $\tau$-Hochschild homology, and what replaces the cyclic bicomplex? This seems closely related to \cite[Question~9.2]{A5} on the monodromy of $\sigma$ on cyclic and negative cyclic homology: the rotation that $\tau$-homology forgets is the one whose homotopy-coherent memory the Coxeter automorphism retains.

\subsection{A Coxeter operator on the shadow}
By Remark~\ref{rem:derivedpart} the graded bimodule $\bigoplus_n\Tor_n^\Lm(\D\Lm,\D\Lm)$ is, up to conjugation, a derived invariant, and $\om$ tautologically commutes with $\om^{\Ltimes2}$.

Is there a canonical Coxeter-type operator on 
\[
\bigoplus_n\Tor_n^\Lm(\D\Lm,\D\Lm)\cong\D\bigoplus_n\Ext^n_{\Lme}(\Lm,\Lme),
\]
compatible with $\sigma_\Lm$ under the cap-product pairings, and does it preserve the images of the residue filtration $\B_\bullet\subseteq\tau_\bullet\Lm$? For $d$-hereditary algebras \cite{HI,HIO}, where Theorem~\ref{thm:preproj} identifies the top shadow with $\D\Pii_{d+1}(\Lm)_1$, this should be computable from the graded automorphisms of the higher preprojective algebra and would provide test data for which the entropy computations of \cite{Han3} are a natural benchmark.

\subsection{Serre-twisted finiteness and a perfect dichotomy}\label{subsec:twistedhappel}
Call $\Ext^\bullet_{\Lme}(\Lm,\Lme)$ the \emph{Serre-twisted Hochschild cohomology} of $\Lm$ (the derived shadow of this paper). By analogy with Happel's question \cite{Hap1} one may ask whether its total finiteness forces $\gldim\Lm<\infty$. The answer is no for trivial reasons: by Corollary~\ref{cor:selfinj} it vanishes in all positive degrees for every selfinjective algebra. The twisted theory is thus blind on (part of) the singular locus, and it is exactly the residue $\B_\bullet$ that keeps the finiteness characterization \cite[Theorem~5.5]{CLMS2} alive there. At the level of the full translates, however, the dichotomy is perfect and elementary:

\begin{proposition}\label{prop:totalfinite}
For a finite-dimensional algebra $\Lm$ with $\E$ separable,
\[
\dim_\kk\bigoplus_{n\geq1}\tau_n\Lm<\infty
\quad\Longleftrightarrow\quad
\gldim\Lm<\infty,
\]
and in that case $\tau_n\Lm=0$ exactly for $n>\gldim\Lm$.
\end{proposition}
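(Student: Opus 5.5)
The plan is to use the identification $\tau_n\Lm=\ker\nu(d_n)$ of Theorem~\ref{thm:bridge}(2), together with the equality $\pdim_{\Lme}\Lm=\gldim\Lm$ from Section~\ref{subsec:happel}, and to show that $\tau_n\Lm\neq0$ exactly when $P_n\neq0$.

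First, the easy implication. If $\gldim\Lm=d<\infty$, then $P_n=0$ for $n>d$, so $\nu P_n=0$ and $\tau_n\Lm=0$. The sum $\bigoplus_{n\ge1}\tau_n\Lm$ therefore has at most $d$ nonzero summands, each finite-dimensional, and so it is finite-dimensional.

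Second, the converse together with the sharpness claim. It suffices to show that $\tau_n\Lm\neq0$ whenever $P_n\neq0$ and $n\ge1$. If $\gldim\Lm=\infty$, then every $P_n$ is nonzero, infinitely many $\tau_n\Lm$ are nonzero, and the total dimension is infinite. If $\gldim\Lm=d$ is finite, the same claim gives $\tau_n\Lm\ne0$ for $1\le n\le d$, which is the ``exactly'' statement.

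Third, the key claim. I would argue through the Auslander--Reiten translate of $\Lme$ rather than through $\nu$ directly. Since $P_\bullet$ is minimal, $M=\Omega^{n-1}\Lm$ is the $(n-1)$-st syzygy of $\Lm$ over $\Lme$. If $P_n\neq0$, then $M$ is not projective, since otherwise the minimal resolution would stop at $P_{n-1}$. For $n=1$ we have $M=\Lm$, which is non-projective as a bimodule because $P_1\ne0$. Standard Auslander--Reiten theory over the finite-dimensional algebra $\Lme$ says $\tau M=0$ if and only if $M$ is projective; this follows from $\tau M=\D\Tr M$, where $\Tr M=0$ if and only if $M$ is projective. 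Hence $\tau_n\Lm=\tau\Omega^{n-1}\Lm\neq0$.

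The main obstacle is to check that this is consistent with the conventions of Theorem~\ref{thm:bridge}. There $\tau_n\Lm$ is computed from the minimal presentation $P_n\to P_{n-1}\to\Omega^{n-1}\Lm$, and I need the minimal presentation to be genuinely of $\Omega^{n-1}\Lm$ with $P_n$ its projective cover term. This holds by minimality ($\im d_n\subseteq\rad P_{n-1}$). It then remains to confirm that nonvanishing of $\ker\nu(d_n)$ follows from non-projectivity of $\Omega^{n-1}\Lm$ without an extra indecomposability hypothesis. This works because $\Tr$ and $\tau$ are additive and kill exactly the projective summands, and $\Omega^{n-1}\Lm$ has no projective summands by minimality. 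If that route proved delicate, I would instead argue directly: if $\nu(d_n)$ were injective, then dualizing would make $\Hom_{\Lme}(d_n,\Lme)$ surjective, so $d_n$ would be a split monomorphism. A split monomorphism into $P_{n-1}$ contradicts $\im d_n\subseteq\rad P_{n-1}$ when $P_n\ne0$.
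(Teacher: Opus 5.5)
Your proof is correct and follows essentially the paper's route: both rest on $\pdim_{\Lme}\Lm=\gldim\Lm$, minimality of Happel's resolution, and the fact that $\tau M=0$ over $\Lme$ exactly when $M$ is projective. Your version actually spells out $\tau_n\Lm\neq0$ for every $1\le n\le d$, which the ``exactly'' clause needs, whereas the paper states it explicitly only for $n=d$.

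One small correction: your aside that $\Omega^{n-1}\Lm$ has no projective summands ``by minimality'' is false in general, since syzygies in a minimal resolution can have projective summands. It is not needed, because non-projectivity of $\Omega^{n-1}\Lm$ alone already forces $\tau\Omega^{n-1}\Lm\neq0$. Your split-monomorphism fallback is also rigorous as it stands.
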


\begin{proof}
If $d=\gldim\Lm<\infty$ then $\Omega^{n-1}\Lm$ is projective over $\Lme$ for $n>d$, so $\tau_n\Lm=\tau\Omega^{n-1}\Lm=0$ for $n>d$, and the sum is finite-dimensional; minimality of \eqref{eq:happel} gives $\Omega^{d-1}\Lm$ non-projective when $d\geq1$, so $\tau_d\Lm\neq0$. Indeed $\tau_d\Lm\cong\D\Pii_{d+1}(\Lm)_1\neq0$ by Theorem~\ref{thm:preproj} and minimality. Conversely, if the sum is finite-dimensional then $\tau_N\Lm=0$ for some $N\geq1$, so the bimodule $\Omega^{N-1}\Lm$ is projective, i.e.\ $\pdim_{\Lme}\Lm<\infty$, which equals $\gldim\Lm$ \cite[Corollary~3.5]{CLMS2}.
\end{proof}

This is the bimodule-level companion of \cite[Theorem~5.5]{CLMS2}, which characterizes finite global dimension through the finiteness of the $\tau$-(co)homology \emph{groups} in terms of the refined $\pm$-global dimensions: the groups can stay small on the singular locus only because (co)invariants destroy most of the translate, never because the translates themselves decay.

\subsection{Contrast with singular Hochschild cohomology}\label{subsec:tate}
Singular (Tate) Hochschild cohomology, computed through the singularity category of $\Lme$, is invariant under derived equivalences (indeed under suitable singular equivalences \cite{Kel4,Wan}) and vanishes precisely when $\gldim\Lm<\infty$: it sees only the singular locus. The $\tau$-Hochschild theory is its mirror image: built from the minimal model, Morita but not derived invariant, fully alive on the smooth locus where, by Theorem~\ref{thm:preproj}, it computes higher preprojective data, and degenerating to pure residue on the selfinjective locus (Corollary~\ref{cor:selfinj}). The two refinements of Hochschild theory thus bracket the ordinary one from opposite sides, with the Coxeter automorphism acting on the smooth side. Finally, the integral chain-level lattices implicit in the minimal complexes of \cite{CLMS2} are natural candidates for an integral structure on the shadow $\Tor_\bullet^\Lm(\D\Lm,\D\Lm)$ compatible with the Shklyarov pairing \cite{Shk}.

\subsection*{Disclosure}
During the preparation of this work, the author used Claude, Anthropic's Fable 5 model, for deep research, formulation of theorems, and drafts of their proofs. The author reviewed and edited the output as needed and takes full responsibility for the content of the published article.

\end{document}